\documentclass[a4paper,11pt]{amsart}
\usepackage[margin= 2.5 cm,bottom=19mm]{geometry}
\usepackage{thm-restate}
\usepackage{graphicx}
\usepackage{geometry}
\usepackage{amsthm}
\usepackage{amssymb}
\usepackage{amsmath}
\usepackage{comment}
\usepackage{hyperref}
\usepackage{xcolor}
\usepackage{enumerate}
\usepackage{enumitem}
\usepackage{listings}
\usepackage{complexity}
\usepackage{blkarray}
\usepackage{lmodern}
\usepackage[english]{babel}
\usepackage{accents}
\usepackage{float}
\usepackage{tikz}
\usetikzlibrary{decorations.pathmorphing}

\tikzset{snake it/.style={decorate, decoration=snake}}

\usetikzlibrary{calc}
\usetikzlibrary{decorations.pathreplacing}
\usetikzlibrary{positioning,patterns}
\usetikzlibrary{arrows,shapes,positioning}
\usetikzlibrary{decorations.markings}

\tikzstyle{edge}=[very thick]
\definecolor{bostonuniversityred}{rgb}{0.8, 0.0, 0.0}
\definecolor{arsenic}{rgb}{0.23, 0.27, 0.29}
\tikzstyle{diredge}=[postaction={decorate,decoration={markings,
		mark=at position .95 with {\arrow[scale = 1]{stealth};}}}]

\newcommand{\fitellipsis}[2] % first and second node names without parentheses
{\draw [fill=gray]let \p1=(#1), \p2=(#2), \n1={atan2(\y2-\y1,\x2-\x1)}, \n2={veclen(\y2-\y1,\x2-\x1)}
    in ($ (\p1)!0.5!(\p2) $) ellipse [ x radius=\n2/2+0cm, y radius=0.4cm, rotate=\n1];
}

\usepackage[font=small,labelfont=bf]{caption}
\usepackage[font=small,labelfont=normalfont,labelformat=simple]{subcaption}
\widowpenalty10000
\clubpenalty10000

\graphicspath{{figs/}}

% custom environments
\newtheorem{theorem}{Theorem}[section]
\newtheorem{lemma}[theorem]{Lemma}
\newtheorem{corollary}[theorem]{Corollary}

\newtheorem{conjecture}[theorem]{Conjecture}

\newtheorem{claim}[theorem]{Claim}

\theoremstyle{definition}

\setlength{\tabcolsep}{20pt}

\newenvironment{claimproof}[1][\proofname]{\proof[#1]}{\endproof}

\author[Martinsson]{Anders Martinsson}
\address[Martinsson]{Department of Computer Science, Institute of Theoretical Computer Science, ETH Z\"{u}rich, Switzerland}
\email{\tt {anders.martinsson}@inf.ethz.ch}
\author[Steiner]{Raphael Steiner}
\address[Steiner]{Department of Computer Science, Institute of Theoretical Computer Science, ETH Z\"{u}rich, Switzerland}
\email{\tt {raphaelmario.steiner}@inf.ethz.ch}

\thanks{The second author was supported by an ETH Z\"{u}rich Postdoctoral Fellowship.
}

\date{\today}

\title{Strengthening Hadwiger's conjecture for $4$- and $5$-chromatic graphs}
\begin{document} 
\maketitle

\begin{abstract}
Hadwiger's famous coloring conjecture states that every $t$-chromatic graph contains a $K_t$-minor. Holroyd [Bull. London Math. Soc. 29, (1997), pp. 139--144] conjectured the following strengthening of Hadwiger's conjecture: If $G$ is a $t$-chromatic graph and $S \subseteq V(G)$ takes all colors in every $t$-coloring of $G$, then $G$ contains a $K_t$-minor \emph{rooted at $S$}. 

We prove this conjecture in the first open case of $t=4$. Notably, our result also directly implies a stronger version of Hadwiger's conjecture for $5$-chromatic graphs as follows:

Every $5$-chromatic graph contains a $K_5$-minor with a singleton branch-set. In fact, in a $5$-vertex-critical graph we may specify the singleton branch-set to be any vertex of the graph.
\end{abstract}

\section{Introduction}

Given a graph $G$ and a number $t \in \mathbb{N}$, a \emph{$K_t$-minor} in $G$ is a collection $(B_i)_{i=1}^{t}$ of pairwise disjoint non-empty subsets of $V(G)$ such that $G[B_i]$ is connected for every $i \in [t]$ and for every distinct $i, j \in [t]$ the sets $B_i$ and $B_j$ are adjacent\footnote{Two subsets $A, B$ of the vertex-set $V(G)$ of a graph $G$ are called \emph{adjacent} if there exists an edge in $G$ with one endpoint in $A$ and one endpoint in $B$.} in $G$. The sets $B_1, \ldots,B_t$ are also called the \emph{branch-sets} of the $K_t$-minor. 

Hadwiger's conjecture, among the most famous and difficult open problems in graph theory, states the following. 

\begin{conjecture}[Hadwiger 1943~\cite{hadwiger}]
For every integer $t \ge 1$, every graph $G$ with $\chi(G)=t$ contains a $K_t$-minor.
\end{conjecture}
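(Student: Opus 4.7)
The natural approach is strong induction on $t$. The base cases $t \leq 3$ are direct: a $2$-chromatic graph contains an edge, and any graph of chromatic number at least $3$ contains an odd cycle, which contracts to a $K_3$. For $t=4$, the statement is equivalent (via its contrapositive) to the claim that every $K_4$-minor-free graph is $3$-colorable; this follows from the classical fact that $K_4$-minor-free graphs are precisely the subgraphs of $2$-trees (equivalently, series--parallel graphs), which are trivially $3$-colorable by induction on the tree construction.

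For the inductive step, I would pass to a minimum counterexample $G$ with $\chi(G)=t$, which may be assumed $t$-vertex-critical, so that $\delta(G) \geq t-1$ and $G$ is $2$-connected. The plan is to locate an edge $xy \in E(G)$ whose contraction $G/xy$ still satisfies $\chi(G/xy)=t$: any $K_t$-minor in the smaller graph $G/xy$ lifts to a $K_t$-minor in $G$, contradicting minimality. If no such edge exists, then $G$ is \emph{contraction-critical}, meaning every edge contraction strictly decreases $\chi$, and this severely restricts the local structure (each edge carries an explicit obstruction witnessing that its endpoints cannot be identified). One would then aim to exploit this rigidity, for example by locating a dense subgraph or a low-order separator that either directly realizes a $K_t$-minor or permits smaller minors to be glued across it.

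The principal obstacle is that this inductive strategy has no known completion for general $t$. The cases $t=5$ and $t=6$ are known (Wagner; Robertson--Seymour--Thomas), but both reduce the problem to the Four Color Theorem via deep structure theorems for $K_5$- and $K_6$-minor-free graphs, and carrying such a ``reduction to planarity'' to $t \geq 7$ appears hopeless with current techniques, as the relevant structural decompositions grow prohibitively intricate. A realistic partial goal, instead, is the weakening $\chi(G) = O(t \cdot \mathrm{polylog}\, t)$ for $K_t$-minor-free $G$, in the spirit of the Norin--Postle--Song program. In the context of the present paper, the sensible target is small $t$, together with the stronger rooted version of Hadwiger's conjecture due to Holroyd, as the abstract indicates for $t=4$.
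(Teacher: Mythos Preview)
The statement you are addressing is Hadwiger's \emph{conjecture}, not a theorem proved in the paper. The paper states it as Conjecture~1.1 and does not attempt a proof; its actual contribution is Theorem~\ref{thm:main}, the Strong Hadwiger conjecture for $t=4$, which in turn yields Hadwiger for $t=5$ as a corollary. So there is no ``paper's own proof'' to compare against.

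Your proposal is not a proof either, and you are aware of this: you correctly note that the inductive strategy via contraction-criticality has no known completion for $t\ge 7$, and that $t=5,6$ rely on the Four Color Theorem through deep structure theorems. Your sketches for $t\le 4$ are fine, and your assessment of the obstacles for large $t$ is accurate. But as a proof attempt for the full conjecture it is necessarily incomplete, because the conjecture is open. If the intent was to prove the specific statement highlighted, the honest answer is that no proof exists; if the intent was to engage with the paper's results, the target should be Theorem~\ref{thm:main} rather than Conjecture~1.1.
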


Hadwiger's conjecture was proved for $t \le 4$ by Hadwiger himself~\cite{hadwiger}, and also by Dirac~\cite{dirac}, while the case $t=5$ was shown to be equivalent to the Four-Color-Problem by Wagner~\cite{wagner}. Since Appel and Haken's proof of the Four-Color-Theorem~\cite{appelhaken1,appelhaken2} in 1976 and its later independent proof by Robertson, Sanders, Seymour and Thomas~\cite{robertson2}, Hadwiger's conjecture was known to hold for $t=5$. In a tour de force, in 1993 Robertson, Seymour and Thomas~\cite{robertson1} proved Hadwiger's conjecture for $t=6$ by reducing it to the Four-Color Theorem. All the cases $t \ge 7$ remain wide open as of today. Much of recent research has focused on asymptotic bounds for coloring graphs with no $K_t$-minor rather than on the precise version of the conjecture, and there has been some exciting progress in this direction recently~\cite{del,norin}. For further partial results on Hadwiger's conjecture and background, we refer the reader to the survey article~\cite{survey} by Seymour.  

In this paper, we investigate a strengthening of Hadwiger's conjecture proposed by Holroyd~\cite{holroyd} in 1997. His stronger version of Hadwiger's conjecture concerns the containment of minors in graphs of given chromatic number that are \emph{rooted} at a particular set of vertices.
Rooted minors have received significant attention before, we refer to~\cite{demasi,ellen,rooted,hayashi,kriesell,kriesell2,marx,wollan} for a selection of results on rooted minors in graphs. Here we use the following definition: For a set $S \subseteq V(G)$, we say that a $K_t$-minor $(B_i)_{i=1}^{t}$ in $G$ is \emph{$S$-rooted} or \emph{rooted at $S$} if $B_i \cap S \neq \emptyset$ for all $i \in [t]$. A second notion we need to introduce before stating Holroyd's conjecture is that of \emph{colorful sets}. For a graph $G$ and $S \subseteq V(G)$ we say that $S$ is \emph{colorful} in $G$ if for every proper $\chi(G)$-coloring of $G$ there are vertices of all $\chi(G)$ colors contained in $S$.

A colorful set in a graph may be seen as a part of the graph that ``is hard to color''. In that sense, it is quite intuitive that one may hope to find a $K_t$-minor rooted at this part of the graph. This is exactly Holroyd's conjecture (named ``Strong Hadwiger's conjecture'' in~\cite{holroyd}):

\begin{conjecture}[Strong Hadwiger's conjecture, Holroyd 1997~\cite{holroyd}]
Let $G$ be a graph with $\chi(G)=t$, and let $S$ be a colorful set in $G$. Then $G$ contains an $S$-rooted $K_t$-minor.
\end{conjecture}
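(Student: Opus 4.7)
Since the conjecture is open in general, I focus on the first open case $t=4$, as in the abstract. My plan is strong induction on $|V(G)|$ via a minimum counterexample: suppose $G$ is a $4$-chromatic graph with colorful set $S$, admits no $S$-rooted $K_4$-minor, and is minimum in $|V(G)|+|E(G)|$ with these properties.

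The first block of work is a sequence of reductions enforcing structural regularity of $G$. If some $v\notin S$ has $\deg_G(v)\le 3$, then any $3$-coloring of $G-v$ would extend to a $4$-coloring of $G$ assigning colors from $\{1,2,3\}$ to $S\subseteq V(G-v)$, contradicting colorfulness of $S$; hence $\chi(G-v)=4$. Moreover, any $4$-coloring of $G-v$ extends to $G$ (as $v$ has fewer than $4$ neighbors), so its restriction to $S$ uses all four colors. Inductively, $G-v$ contains an $S$-rooted $K_4$-minor, which also lives in $G$ — contradiction. A parallel but more delicate analysis should handle vertices $v\in S$ of small degree, using the observation that for $\deg(v)\in\{0,1,2\}$ one can still show $S\setminus\{v\}$ is colorful in $G-v$ (when $G-v$ remains $4$-chromatic; otherwise a reduction to a rooted $K_3$-minor problem in the $3$-chromatic graph $G-v$ should kick in). A standard block decomposition further reduces to $G$ being $2$-connected.

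With these reductions in hand, I would take a proper $4$-coloring $c$ of $G$ with color classes $V_1,\dots,V_4$, pick representatives $s_i\in S\cap V_i$, and study the Kempe subgraphs $G[V_i\cup V_j]$. Colorfulness of $S$ provides strong rigidity: swapping colors on any component of a Kempe subgraph produces another $4$-coloring of $G$, which again must use all four colors on $S$, tightly constraining the distribution of $S$ across Kempe components. The plan is to replace $c$ iteratively by Kempe-swapped colorings until an extremal configuration is reached (minimizing a suitable potential), and then to define the candidate branch set $B_i$ as the $V_i$-part of the Kempe components through $s_i$ in each $G[V_i\cup V_j]$; adjacency between $B_i$ and $B_j$ would be witnessed by the bichromatic path connecting $s_i$ to $s_j$ inside $G[V_i\cup V_j]$.

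The step I expect to be the main obstacle is ensuring that the four candidate branch sets are simultaneously pairwise disjoint, each connected in $G$, pairwise adjacent, and each meeting $S$. The Kempe framework naturally supplies the last condition (via the seeds $s_i$) and candidate adjacencies, but the chain through $s_i$ in $G[V_i\cup V_j]$ may share vertices of $V_i$ with the chain through $s_i$ in $G[V_i\cup V_k]$; cleanly separating all six chains is delicate and will likely require combining Kempe extremality of $c$ with the $2$-connectedness and minimum-degree consequences of the reductions, possibly supplemented by Menger-type rerouting inside each bichromatic component. The low-degree case analysis for vertices of $S$ in the reduction phase is the other place where I expect the bulk of the technical work, since deleting or contracting such a vertex risks destroying the very property — colorfulness of $S$ — that drives the induction.
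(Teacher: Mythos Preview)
Your Kempe-chain plan has a genuine gap, and it is not merely the technical ``main obstacle'' you flag. First, observe a concrete problem with your candidate branch sets: you define $B_i$ as ``the $V_i$-part of the Kempe components through $s_i$ in each $G[V_i\cup V_j]$'', but then $B_i\subseteq V_i$ is an independent set, so $G[B_i]$ is edgeless and cannot be connected unless $|B_i|=1$. Any fix must include vertices of other colors in $B_i$, and then the six bichromatic chains genuinely collide; colorfulness only tells you that $s_i$ and $s_j$ lie in the same component of $G[V_i\cup V_j]$ for each pair, and this information alone does not let you disentangle the chains into four disjoint connected pairwise adjacent sets. The tools you list (Menger rerouting, $2$-connectivity, minimum degree) are not strong enough to force this.

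There is also a deeper reason your route cannot close: Strong Hadwiger for $t=4$ implies Hadwiger for $t=5$, which by Wagner's theorem is equivalent to the Four-Color Theorem. Hence any valid proof must, somewhere, invoke the Four-Color Theorem or a statement of comparable strength. Your outline uses only elementary coloring, Kempe swaps, and connectivity, none of which can substitute for the Four-Color Theorem; so the argument as sketched cannot possibly be completed. The paper's proof reflects this: after reductions it pushes well beyond $2$-connectivity to show that a minimum counterexample is $3$-connected and that $S$ is \emph{spread out} (meets both sides of every $3$-separation), then applies the Fabila-Monroy--Wood structure theorem for rooted $K_4$-minors to conclude that the apex graph $G^a(S)$ (add one vertex adjacent to all of $S$) is planar, and finally invokes the Four-Color Theorem on $G^a(S)$ to produce a $4$-coloring of $G$ in which $S$ misses a color, contradicting colorfulness. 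Your reductions stop at $2$-connectivity, you do not use the rooted-$K_4$ structure theorem, and you never reach planarity or the Four-Color Theorem; all three are essential in the paper's argument, and some appeal to the Four-Color Theorem is unavoidable.
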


Holroyd proved his conjecture for $t \le 3$ in~\cite{holroyd}. Here, we take the next step and prove the Strong Hadwiger's conjecture for $t=4$. 

\begin{theorem}\label{thm:main}
Let $G$ be a graph with $\chi(G)=4$ and let $S$ be a colorful set in $G$. Then $G$ contains an $S$-rooted $K_4$-minor. 
\end{theorem}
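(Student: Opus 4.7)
The plan is to argue by induction on $|V(G)|$. The base case $|V(G)|=4$ is trivial: since $\chi(G)=4$ we have $G=K_4$ and $S=V(G)$, so the singletons $\{s_i\}$ form the required rooted $K_4$-minor. For the inductive step I would analyse a counterexample $(G,S)$ minimising first $|V(G)|$ and then $|E(G)|$, and derive a contradiction through successive structural reductions.

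The first step is a \emph{criticality reduction}. If $v \in V(G) \setminus S$ satisfies $\chi(G-v)=4$ and $S$ is still colorful in $G-v$, the induction applied to $(G-v, S)$ immediately supplies an $S$-rooted $K_4$-minor of $G$. Hence for every such $v$ either $\chi(G-v)<4$, or there must exist a $4$-coloring of $G-v$ using only three colors on $S$. An analogous edge-criticality analysis constrains $G$ further: for every edge $e$ either $\chi(G-e)<4$ or there is a $4$-coloring of $G-e$ using $\le 3$ colors on $S$. The subtle point is that the colorful condition is not automatically preserved under vertex or edge deletion, so the induction is really on instances $(G,S)$ rather than on graphs; explicit attention is required to keep colorful-ness in play throughout.

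The heart of the proof should then be a careful analysis of small vertex cuts. Any $4$-critical graph is $2$-connected, so the first cases are $2$-cuts and $3$-cuts. If $\{x,y\}$ is a $2$-cut with sides $G_1, G_2$, I would consider the augmented graphs $G_i^+ := G_i + xy$ and the sets $S_i := (S \cap V(G_i)) \cup \{x,y\}$. Using colorful-ness of $S$ in $G$ together with the $3$-colorability of one side, one of the pairs $(G_i^+, S_i)$ should satisfy the hypotheses of the theorem; induction then yields an $S_i$-rooted $K_4$-minor, which can be spliced with a suitable $x$--$y$ connecting structure on the other side to produce the desired $S$-rooted $K_4$-minor of $G$. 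A more delicate version of the same argument should handle $3$-cuts, where the case analysis bifurcates according to which colorings the three cut vertices admit across all $4$-colorings of $G$.

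Once $G$ is assumed to be sufficiently connected, I would combine Dirac's theorem (every $4$-chromatic graph contains a $K_4$-subdivision) with a rooting argument. When $|S|=4$, I would aim to use a characterisation of rooted $K_4$-minors at four specified vertices: the absence of such a minor would exhibit a planar-like structural obstruction with $S$ lying on a single face, from which one could explicitly construct a $4$-coloring of $G$ using only three colors on $S$, contradicting the colorful hypothesis. The general case $|S|>4$ should reduce to $|S|=4$ by passing to a minimal colorful subset.

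The main obstacle, in my view, is the interplay between the small-cut reductions and the colorful condition. Colorful-ness is a global property of $4$-colorings and does not transfer cleanly to pieces obtained by cutting and augmenting: a $4$-coloring of $G_i^+$ need not lift to a $4$-coloring of $G$. I therefore anticipate that a clean induction will require strengthening the theorem's statement---for instance, to a version allowing the colors of some ``boundary'' vertices to be prescribed, or formalising a relative colorful-ness condition at a prescribed cut---so that the reductions at $2$- and $3$-cuts close neatly. Identifying the right such strengthening is likely to be the most creative and technically demanding part of the proof.
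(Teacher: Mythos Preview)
Your overall architecture --- minimal counterexample, reduce through small cuts, then invoke a structural characterisation of rooted $K_4$-minors in the highly connected case --- matches the paper's. But two steps in your plan would not go through as stated.

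First, the reduction to $|S|=4$ by ``passing to a minimal colorful subset'' fails: minimal colorful sets need not have size~$4$. In the wheel $W_5$ (a $5$-cycle plus a hub), one checks that the unique colorful set is $V(W_5)$ itself, of size~$6$. The paper never attempts this reduction. Instead, after showing that $G$ is $3$-connected, it proves a further property of $S$ --- that $S$ is \emph{spread out}, i.e.\ meets both sides of every $3$-separation --- via a separate $3$-cut reduction. With $3$-connectivity and spread-outness in hand, the paper applies the Fabila-Monroy--Wood characterisation to \emph{every} $4$-subset $\{s_1,s_2,s_3,s_4\}\subseteq S$, obtaining for each a planar embedding of $G$ with those four vertices on the outerface; a short Kuratowski argument then upgrades this to planarity of $G^a(S)$, the graph obtained by adding an apex adjacent to all of $S$.

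Second, the endgame is not an ``explicit construction'': once $G^a(S)$ is planar, the paper invokes the Four-Color Theorem on $G^a(S)$, and the apex's color is then absent from $S$, contradicting colorfulness. You should expect the dependence on the Four-Color Theorem; the result implies Hadwiger for $t=5$. Finally, your anticipated need to strengthen the inductive statement is a red herring. The paper simply uses the contrapositive at each cut: one shows that if the smaller piece $G_A$ (with the separator folded into $S_A$) had an $S_A$-rooted $K_4$-minor, it would lift to an $S$-rooted one in $G$; hence it has none, so by minimality $G_A$ admits a proper $4$-coloring using at most three colors on $S_A$. These partial colorings are then glued by permuting colors so that they agree on the separator (including the case where cut vertices share a color, handled by identification rather than adding an edge). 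No boundary-prescribed or relative version of the theorem is required.
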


As was already noted by Holroyd (Theorem~2 in~\cite{holroyd}), the truth of the Strong Hadwiger's conjecture for a value $t$ implies the truth of Hadwiger's conjecture for the value $t+1$. Therefore, Theorem~\ref{thm:main} implies Hadwiger's conjecture for $t=5$. In fact, combining Holroyd's argument with Theorem~\ref{thm:main} we obtain a slight strengthening of Hadwiger's conjecture for $t=5$ as follows. 

\begin{corollary}\noindent
\begin{itemize}
\item Every $5$-vertex-critical graph $G$ for every $v \in V(G)$ has a $K_5$-minor containing $\{v\}$ as a singleton branch-set. 
\item Every $5$-chromatic graph contains a $K_5$-minor with a singleton branch-set.
\end{itemize}
\end{corollary}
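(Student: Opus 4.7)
The plan is to follow Holroyd's original observation that the Strong Hadwiger's conjecture at level $t$ implies a ``singleton branch-set'' strengthening of Hadwiger's conjecture at level $t+1$, and to instantiate it with $t = 4$ using Theorem~\ref{thm:main}.

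First I would reduce the second bullet to the first: any $5$-chromatic graph contains a $5$-vertex-critical subgraph (obtained by repeatedly deleting a vertex that does not drop the chromatic number), and a $K_5$-minor of such a subgraph with a singleton branch-set is in particular a $K_5$-minor of the original graph with a singleton branch-set. It therefore suffices to prove the first bullet.

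For the first bullet, I would fix a $5$-vertex-critical graph $G$ and an arbitrary vertex $v \in V(G)$, and set $H := G - v$. By vertex-criticality, $\chi(H) \le 4$. Moreover, in any proper $4$-coloring of $H$, the set $N_G(v)$ must use all four colors, since otherwise a color missing from $N_G(v)$ could be assigned to $v$ to produce a proper $4$-coloring of $G$, contradicting $\chi(G) = 5$. Hence $\chi(H) = 4$ and $S := N_G(v)$ is a colorful set in $H$. Applying Theorem~\ref{thm:main} to the pair $(H, S)$ yields an $S$-rooted $K_4$-minor $(B_1, B_2, B_3, B_4)$ in $H$. I would then take $(B_1, B_2, B_3, B_4, \{v\})$ as the desired $K_5$-minor of $G$: the $B_i$ are pairwise disjoint connected subsets of $V(G) \setminus \{v\}$ that are pairwise adjacent in $H \subseteq G$, and $\{v\}$ is adjacent to each $B_i$ in $G$ precisely because $B_i \cap N_G(v) \neq \emptyset$ by the rootedness condition. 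The resulting $K_5$-minor has $\{v\}$ as a singleton branch-set, proving the claim for the chosen vertex $v$.

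There is essentially no obstacle beyond Theorem~\ref{thm:main} itself; the only points to verify are that $N_G(v)$ is colorful in $G-v$ (immediate from the definition of vertex-criticality, as sketched above) and that $\{v\}$ can be appended as a fifth branch-set to an $N_G(v)$-rooted $K_4$-minor (immediate from rootedness). The entire conceptual content of the corollary is thus absorbed into Theorem~\ref{thm:main}.
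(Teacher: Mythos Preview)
Your proof is correct and follows essentially the same approach as the paper: reduce the second bullet to the first via a $5$-critical subgraph, then for the first bullet observe that $N_G(v)$ is colorful in $G-v$, apply Theorem~\ref{thm:main} to obtain an $N_G(v)$-rooted $K_4$-minor, and append $\{v\}$ as the fifth branch-set. The paper's proof is more terse, but the argument is the same.
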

\begin{proof}\noindent
\begin{itemize}
    \item Let $v \in V(G)$. Then by criticality of $G$ we have $\chi(G)=5, \chi(G-v)=4$. This implies that $N_G(v)$ is a colorful set in $G-v$. By Theorem~\ref{thm:main} there exists an $N_G(v)$-rooted $K_4$-minor in $G-v$, and adding to it the branch-set $\{v\}$ yields the desired $K_5$-minor.
    \item This follows from the first item since every $5$-chromatic graph has a $5$-critical subgraph. 
\end{itemize}
 
\end{proof}
\noindent To the best of our knowledge these strengthenings of Hadwiger's conjecture for $t=5$ are novel.

\medskip

\paragraph{\textbf{Organization and Outline.}} The rest of the paper is devoted to the proof of Theorem~\ref{thm:main}. In Section~\ref{sec:prelim}, we prepare the proof by discussing a few results from the literature that will be needed in our proof. Most crucially, we will rely on a characterization for the existence of $K_4$-minors rooted at a set of four distinct vertices by Fabila-Monroy and Wood~\cite{rooted}. We then present our proof in Section~\ref{sec:proof}. A main part of the proof is to establish several properties of a smallest counterexample $G$ to Theorem~\ref{thm:main} in terms of connectivity and the distribution of the vertices in $S$ over the graph. In another step, these structural properties can then either be used to find the desired rooted $K_4$-minor directly, or to restrict the structure of $G$ to a planar graph (Lemma~\ref{lma:rootyness}). At this final stage, we invoke the Four-Color-Theorem to derive a $4$-coloring that shows that $S$ cannot be colorful and this yields the desired contradiction.

\section{Preliminaries}\label{sec:prelim}
In this section we state a few preliminary results from the literature that we will use in our proof of Theorem~\ref{thm:main}. The first is a precise characterization when a graph admits a $K_3$-minor rooted at $3$ given vertices due to Holroyd~\cite{holroyd} and Linusson and Wood~\cite{linusson}.
\begin{lemma}[Theorem~6 in~\cite{holroyd}, Lemma~5 in~\cite{linusson}]~\label{lemma:K3minor}
Let $G$ be a graph, and let $a,b,c \in V(G)$ be three distinct vertices. If for every vertex $v \in V(G)$ at least two of $a,b,c$ are in a common component of $G-v$, then there exists an $\{a,b,c\}$-rooted $K_3$-minor in $G$.
\end{lemma}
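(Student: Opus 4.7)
The plan is to reduce the problem to the case where $G$ is $2$-connected, using a block-cut tree analysis, and then to prove the $2$-connected case directly via Menger-style arguments.

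First, the hypothesis forces $a, b, c$ to lie in a common connected component of $G$: otherwise, if say $c$ lies in a different component from $a$, then for $v = a$ the two remaining vertices $b$ and $c$ would lie in distinct components of $G - a$, contradicting the hypothesis. So I may restrict to that component and assume $G$ is connected. Next, consider the block-cut tree $T$ of $G$ and associate each $x \in \{a, b, c\}$ with its node $\pi(x) \in V(T)$: the cut-vertex node if $x$ is a cut vertex, else the unique block node containing $x$. Let $M$ be the Steiner center of $\pi(a), \pi(b), \pi(c)$ in $T$, i.e., the unique node lying on each of the three pairwise paths. If $M$ were a cut-vertex node $w$, then the branches of $T - w$ would place the vertices of $\{a, b, c\} \setminus \{w\}$ into pairwise distinct components of $G - w$, contradicting the hypothesis. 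So $M$ is a block node $B$, and setting $x^{*} = x$ when $x \in V(B)$ and otherwise $x^{*}$ equal to the cut vertex of $B$ on the $T$-path from $\pi(x)$ to $B$, the three legs of the Steiner tree at $B$ leave through pairwise distinct neighbors of $B$ in $T$, so $a^{*}, b^{*}, c^{*}$ are three distinct vertices of $V(B)$.

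Since $B$ has at least three vertices, $B$ is a genuine $2$-connected block. Any $\{a^{*}, b^{*}, c^{*}\}$-rooted $K_3$-minor inside $B$ then lifts to an $\{a, b, c\}$-rooted $K_3$-minor in $G$: for each $x$ with $x \neq x^{*}$, append to the branch set containing $x^{*}$ the path from $x$ to $x^{*}$ inside the subtree of $G$ hanging off $B$ at $x^{*}$. These appended paths lie in pairwise vertex-disjoint regions of $G - V(B)$ (the different subtrees of $T - B$), so the extended branch sets remain disjoint, connected, and pairwise adjacent. Thus it suffices to prove the $2$-connected case.

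For the $2$-connected case, Menger's theorem gives two internally disjoint $a^{*}$-$b^{*}$-paths whose union is a cycle $C$ through $a^{*}$ and $b^{*}$. If $c^{*} \in V(C)$, the three vertices partition $C$ into three arcs and the rooted minor is read off directly. Otherwise, the fan lemma (another consequence of $2$-connectivity) yields two internally disjoint paths $Q_1, Q_2$ from $c^{*}$ to distinct vertices $x_1, x_2 \in V(C)$. A case split on whether $x_1, x_2$ lie on the same $a^{*}b^{*}$-arc of $C$ or on different arcs produces the minor: in the same-arc case one obtains three internally disjoint paths $P_{a^{*}b^{*}}, P_{a^{*}c^{*}}, P_{b^{*}c^{*}}$ directly; in the different-arc case one sets $B_{a^{*}} = V(A_1) \setminus \{b^{*}\}$ and $B_{b^{*}} = V(A_2) \setminus \{a^{*}\}$ (where $A_1, A_2$ are the two $a^{*}b^{*}$-arcs of $C$) and $B_{c^{*}} = \{c^{*}\} \cup \mathrm{int}(Q_1) \cup \mathrm{int}(Q_2)$, and verifies the three pairwise adjacencies.

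The main obstacle is the block-cut tree reduction: one must use the hypothesis precisely to rule out the Steiner center $M$ being a cut vertex, and then to guarantee that the three projected vertices $a^{*}, b^{*}, c^{*}$ are distinct in $V(B)$. Once this reduction is in hand, the $2$-connected case is a routine Menger argument.
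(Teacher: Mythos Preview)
The paper does not prove this lemma; it is quoted as a preliminary result from Holroyd and from Linusson--Wood, so there is no in-paper argument to compare against. Your task therefore reduces to giving a correct self-contained proof, and your approach does this.

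The block--cut tree reduction is the natural route, and your use of the median (Steiner center) is exactly what is needed: if the median were a cut-vertex node $w$, the hypothesis would be violated at $v=w$, so the median is a block $B$, and the three projections $a^{*},b^{*},c^{*}$ are distinct in $V(B)$. The lifting of a rooted $K_3$-minor in $B$ back to $G$ via the hanging subtrees is correct. The $2$-connected case, handled by taking a cycle through $a^{*},b^{*}$ and a two-fan from $c^{*}$ to the cycle, with the same-arc/different-arc split, is a clean standard argument and produces the desired branch sets.

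Two small points to tighten. First, your opening connectedness step is not quite right as written: if $c$ lies in a different component from $a$ but $b$ happens to lie with $c$, then deleting $v=a$ leaves $b$ and $c$ together and there is no contradiction. The fix is immediate---if the three are not all in one component, pick two of them lying in different components and set $v$ equal to the third; then those two remain separated in $G-v$ and the third is gone. Second, the one-line justification that $a^{*},b^{*},c^{*}$ are distinct (``the three legs leave through pairwise distinct neighbours of $B$'') tacitly assumes $\pi(x)\neq B$ for all three; when $\pi(x)=B$ one has $x^{*}=x$, a non-cut-vertex of $B$, and one still needs a word on why $x^{*}\neq y^{*}$ for the other roots. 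This follows because $B$ lies on the $T$-path between every pair among $\pi(a),\pi(b),\pi(c)$, but it deserves a sentence. Neither issue affects the overall validity.
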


Second, we will need a result by Fabila-Monroy and Wood \cite{rooted} for characterising when a graph contains a $K_4$-minor rooted at four given verties $a,b,c,d$. Following the notation in~\cite{rooted}, given a graph $H$, we denote by $H^+$ a graph obtained from $H$ by, for each triangle $T$ in $H$, adding a disjoint (possibly empty) set of vertices $C_T$ to $H$, making them a clique and adjacent to the three vertices of the triangle $T$ in $H$ (the sets $C_T$ and $C_{T'}$ must be chosen disjoint for different triangles $T, T'$ in $H$). An $\{a, b, c, d\}$-web is a graph $H^+$ where $H$ has a planar embedding with outerface $\{a, b, c, d\}$ in some order, and such that every triangle in $H$ forms a face in this embedding. Fabila-Monroy and Wood~\cite{rooted} give a precise description of the edge-maximal graphs with no $K_4$-minor rooted at four specified vertices. Here, we will only need a weaker version of their main result.

\begin{theorem}\label{theorem:wood}
Let $G$ be a graph with four marked vertices $a,b,c,d$ such that $G$ contains no $\{a,b,c,d\}$-rooted $K_4$-minor. Then either
\begin{enumerate}
\item $G$ is a spanning subgraph of an $\{a,b,c,d\}$-web, or
\item there exist vertices $u,v \in V(G)$ such that $G-\{u,v\}$ has at least three distinct connected components.
\end{enumerate}
\end{theorem}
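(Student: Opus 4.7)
The plan is to prove the theorem by induction on $|V(G)|$ and analyze $G$ according to its connectivity relative to the marked vertices $\{a,b,c,d\}$. The idea is that if no small vertex-cut trivialises the picture, then $G$ must be ``close to 3-connected'' in a way that forces a planar embedding with $\{a,b,c,d\}$ on the outer face; otherwise we recurse on the pieces after a small separator.

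First I would dispose of the low-connectivity reductions. If $G$ is disconnected or has a cut-vertex $v$, I look at how $\{a,b,c,d\}$ distribute among the components or blocks. If some marked vertex is isolated from the others, then clearly $G$ has no rooted $K_4$-minor and the pieces can be combined into a web spanning $G$. Otherwise I recurse on the piece containing (most of) $\{a,b,c,d\}$, replacing separated marked vertices by the cut-vertex where necessary, and combine the inductive web structures.

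Next I handle 2-cuts. If some 2-cut $\{u,v\}$ leaves $\ge 3$ components, we are immediately in case~(2). Otherwise every 2-cut splits $G$ into exactly two parts $X_1,X_2$, and I apply induction to each augmented piece $G_i := G[X_i\cup\{u,v\}]+uv$ with the marked vertices of $X_{3-i}$ replaced by $u$ or $v$. An inductive web on each side glues along $\{u,v\}$ to give a web on $G$: the extra edge $uv$ either lies in $H$ directly, or $u,v$ together with a third vertex form a triangle $T$ of $H$ whose ``clique side'' furnishes one of the $C_T$ attachments. A similar 3-sum treatment applies to 3-cuts that form a triangle, which is exactly how the $C_T$ cliques enter the structure theorem.

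This reduces everything to the case where $G$ is essentially 3-connected (with any 3-cut being a triangle whose other side is a clique that can be split off as a $C_T$). The key claim is then that such a $G$ with no $\{a,b,c,d\}$-rooted $K_4$-minor admits a planar embedding with $a,b,c,d$ on the outer face (in some order). I would argue the contrapositive: assuming no such embedding exists, use 3-connectedness and Menger's theorem to locate a Kuratowski-type obstruction (a subdivided $K_5$ or $K_{3,3}$, or a planar embedding forced to separate $\{a,b,c,d\}$ across different faces) and extract from it four pairwise vertex-disjoint connected subgraphs containing $a,b,c,d$ respectively and pairwise joined by edges.

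The main obstacle is precisely this last step: cleanly converting the absence of a planar embedding with $\{a,b,c,d\}$ on a common face into an explicit rooted $K_4$-minor. This is the core of the Fabila-Monroy--Wood argument and requires a delicate Kuratowski-style analysis tailored to the rooted setting, together with careful bookkeeping to ensure that the triangle 3-cuts of $G$ line up exactly with the $C_T$ clique attachments in the definition of an $\{a,b,c,d\}$-web.
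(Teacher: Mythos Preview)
The paper does not attempt a self-contained proof of this theorem at all. Its entire argument is to invoke Theorem~15 of Fabila-Monroy and Wood, which classifies the edge-maximal graphs with no rooted $K_4$-minor into six explicit families $\mathcal{A}$--$\mathcal{F}$, observe that class $\mathcal{D}$ is exactly the $\{a,b,c,d\}$-webs, and then check (in one sentence) that every graph in the remaining five classes has a $2$-separator whose removal leaves at least three components. So your approach is genuinely different in spirit: you are outlining how one might reprove the relevant fragment of the Fabila-Monroy--Wood structure theorem from scratch, whereas the paper simply quotes it.

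That said, your proposal is not a proof but a plan, and the plan has a real gap exactly where you say it does. The final step --- showing that a $3$-connected graph with no rooted $K_4$-minor at $\{a,b,c,d\}$ admits a planar embedding with these four vertices on the outer face --- is the entire content of the Fabila-Monroy--Wood paper in the $3$-connected case, and it is not something one can wave through with ``use Menger and locate a Kuratowski obstruction''. You acknowledge this yourself, so what you have written is a reduction to the hard case rather than a proof of the theorem. Moreover, some of your earlier reductions are not quite right as stated: when you glue two inductive webs along a $2$-cut $\{u,v\}$ it is not automatic that the result is again an $\{a,b,c,d\}$-web (the outer-face and ``every triangle is a face'' conditions need to be re-established for the glued $H$, and the way marked vertices get replaced by $u$ or $v$ on each side must be reconciled with where $a,b,c,d$ sit on the final outer face). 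In the actual classification, some $2$-connected examples land in classes other than $\mathcal{D}$ precisely because such gluings can create the three-component $2$-separators of case~(2) rather than a web. If your goal is a self-contained argument, you would need to carry out both the $3$-connected planar-embedding step and a more careful $2$-cut analysis; if not, the paper's one-line citation is the intended route.
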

\begin{proof}
By Theorem 15 in \cite{rooted}, $G$ is a spanning subgraph of a graph in one the classes $\mathcal{A}$-$\mathcal{F}$. Class $\mathcal{D}$ is precisely the $\{a,b,c,d\}$-webs in which case $(1)$ holds. It can be directly checked that all graphs in the remaining classes (and thus also their spanning subgraphs) have a $2$-separator as described in $(2)$.
\end{proof}

Throughout the proof we will use the following convenient (and standard) terminology to speak about separators in graphs:
A \emph{separation} of a graph is a tuple $(A,B)$ where $A, B$ are subsets of $V(G)$ with $A \cup B=V(G)$, such that $A \setminus B\neq \emptyset \neq B \setminus A$ and such that there are no edges in $G$ connecting a vertex in $A\setminus B$ to a vertex in $B \setminus A$. The \emph{order} of a separation is $|A \cap B|$, and $A\cap B$ is called the \emph{separator} of the separation $(A,B)$. It is easy to see that a graph is $k$-connected if and only if all its separations have order at least $k$. In the proof, we repeatedly use Menger's theorem for vertex-disjoint paths in the following two variants:

\begin{theorem}[Menger's theorem-set versions, cf. Theorem~3.3 in~\cite{diestel}]
Let $G$ be $k$-connected, $v \in V(G)$ and $A, B \subseteq V(G)$. 
\begin{itemize}
    \item There are $\min\{k,|A|,|B|\}$ vertex-disjoint paths in $G$ each with endpoints in $A$ and $B$. 
\item If $v\notin A$ then there are $\min\{k,|A|\}$ paths in $G$, each of them connecting $v$ to a vertex in $A$, and any two of them only sharing the vertex $v$.
\end{itemize}
\end{theorem}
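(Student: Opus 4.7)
The plan is to derive both statements from the classical two-vertex version of Menger's theorem (the maximum number of internally vertex-disjoint $s$-$t$ paths equals the minimum size of an $s$-$t$ separator) via a standard auxiliary-vertex construction.

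For the first item, I would build $G'$ from $G$ by adding two new vertices $s$ and $t$, making $s$ adjacent to every vertex of $A$ and $t$ adjacent to every vertex of $B$. Families of internally vertex-disjoint $s$-$t$ paths in $G'$ correspond bijectively to families of pairwise vertex-disjoint $A$-$B$ paths in $G$ (allowing degenerate single-vertex paths at vertices of $A \cap B$). Applying vertex Menger to the non-adjacent pair $s,t$ reduces the claim to showing that every $s$-$t$ separator $X \subseteq V(G)$ in $G'$ satisfies $|X| \ge \ell := \min\{k,|A|,|B|\}$. I would split into cases: if $A \subseteq X$ or $B \subseteq X$, then immediately $|X| \ge \min\{|A|,|B|\} \ge \ell$; otherwise, picking any $a \in A \setminus X$ and $b \in B \setminus X$, the fact that prepending $sa$ and appending $bt$ turns every $a$-$b$ path of $G$ into an $s$-$t$ path of $G'$ forces $X$ to separate $a$ from $b$ in $G$, so $|X| \ge k \ge \ell$ by $k$-connectivity (with a small side-check handling the possibility $a=b$).

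For the second item I would apply the same construction with a single auxiliary vertex $t$ adjacent to every vertex of $A$, forming $G''$. Internally disjoint $v$-$t$ paths in $G''$ correspond bijectively to paths from $v$ to $A$ in $G$ that pairwise share only $v$, so vertex Menger reduces the claim to bounding a minimum $v$-$t$ separator $X \subseteq V(G) \setminus \{v\}$ in $G''$. If $A \subseteq X$, then $|X| \ge |A|$; otherwise some $a \in A \setminus X$ is still separated from $v$ by $X$ inside $G$, whence $|X| \ge k$ by $k$-connectivity of $G$.

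The only subtlety lies in the bookkeeping of degenerate paths (single vertices in $A \cap B$, or single edges when $v$ is adjacent to $A$), but this is automatically absorbed by introducing the auxiliary vertices, so no essential new idea beyond the reduction to the standard two-vertex statement is required.
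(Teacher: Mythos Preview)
Your reduction is correct and is the standard textbook derivation of the set versions from the two-vertex form of Menger's theorem. Note, however, that the paper does not give its own proof of this statement: it is quoted as a preliminary result with a reference to Diestel's book (Theorem~3.3 in~\cite{diestel}), so there is no in-paper proof to compare against. Your write-up is exactly the kind of argument one finds in that reference.
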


We note that in the above statement as well as throughout the whole manuscript we allow paths to consist of single vertices (i.e., be of length $0$). 

\section{The proof}\label{sec:proof}

For a $3$-connected graph $G$ and a subset $S \subseteq V(G)$ of vertices, we say that $S$ is \emph{spread out in $G$} if for every separation $(A,B)$ in $G$ of order $3$, it holds that $S \setminus A \neq \emptyset \neq S \setminus B$. The following auxiliary statement will be of crucical use in our proof of Theorem~\ref{thm:main}. 

\begin{lemma}\label{lma:rootyness}
Let $G$ be a $3$-connected graph, and let $S \subseteq V(G)$ such that $|S|\ge 4$ and $S$ is spread out in $G$. If $G$ contains no $S$-rooted $K_4$-minor then the graph $G^a(S)$, obtained from $G$ by adding a new vertex and making it adjacent to every element of $S$, is planar.
\end{lemma}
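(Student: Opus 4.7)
The plan is to combine the structural characterization of Fabila-Monroy and Wood (Theorem~\ref{theorem:wood}) with the $3$-connectivity and spread out hypotheses in order to exhibit a planar embedding of $G$ in which every vertex of $S$ lies on a common face; the new vertex in $G^a(S)$ can then be inserted in that face.

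First I would pick four distinct vertices $a,b,c,d\in S$. Since $G$ has no $S$-rooted $K_4$-minor it certainly has no $\{a,b,c,d\}$-rooted $K_4$-minor, so Theorem~\ref{theorem:wood} applies. The second alternative of that theorem -- a $2$-separator whose removal leaves at least three components -- is ruled out immediately by the $3$-connectivity of $G$, since a $3$-connected graph remains connected after deleting any two vertices. Hence $G$ is a spanning subgraph of some $\{a,b,c,d\}$-web $H^+$, built from a planar graph $H$ whose outer face is the $4$-cycle on $a,b,c,d$. This already fixes a candidate planar embedding of $H$, and the remaining task is to propagate it to a planar embedding of $G$ in which all of $S$ lies on a common face.

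Next, I would use the spread out hypothesis to control the web structure. For each triangle $T$ of $H$ with $C_T\cap V(G)\neq\emptyset$, the triple $T$ is a $3$-separator of $G$ isolating the ``pocket'' $C_T\cap V(G)$ from the rest, so the spread out condition forces $S\cap C_T\neq\emptyset$; an analogous statement holds for any separating triangle of $H$ itself. Together with Menger's theorem, which produces internally disjoint paths between vertices of $S$ through the $3$-connected $G$, these constraints should prevent a pocket from being ``too big'' (and in particular should exclude a $K_5$-configuration inside $C_T\cup T$), since otherwise two vertices of $S\cap C_T$ together with two further vertices of $S$ reached through $T$ would yield an $S$-rooted $K_4$-minor.

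I expect the main obstacle to be consolidating the different webs arising for different $4$-subsets of $S$: a vertex $s\in S\setminus\{a,b,c,d\}$ might initially fall in the interior of $H$ or in some pocket, and a single application of Theorem~\ref{theorem:wood} is not enough to place it on the outer face. To finish, I would either iterate the theorem by replacing one of $a,b,c,d$ with such an $s$ and glue the two webs along their common $3$-separators, or argue directly via Menger's theorem that the presence of an interior $s\in S$ already provides enough rooted connectivity to construct an $S$-rooted $K_4$-minor, contradicting the assumption. Once every vertex of $S$ is shown to sit on the outer face of a single planar embedding of $G$, inserting the apex vertex inside that face yields the required planar embedding of $G^a(S)$.
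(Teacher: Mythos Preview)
Your opening moves are correct: for any four vertices of $S$ Theorem~\ref{theorem:wood} applies, and alternative~(2) is indeed ruled out by $3$-connectivity, so $G$ is a spanning subgraph of a web $H^+$. You are also right that the spread-out hypothesis forces every nonempty pocket $C_T$ to contain an element of $S$; this observation is the heart of the matter.

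However, from this point on your sketch has a real gap. You need that $G[V(T)\cup C_T]$ admits a planar embedding with $V(T)$ on the outer face, and you propose to obtain this by arguing that an ``overly complicated'' pocket would already produce an $S$-rooted $K_4$-minor. But the web structure says nothing about $G[V(T)\cup C_T]$ beyond the vertex set: it can be an arbitrary graph sitting inside the clique $K_{|T|+|C_T|}$, so appealing to ``no $K_5$-configuration'' or ``not too big'' does not pin down planarity. The paper's argument here is a genuine induction (Lemma~\ref{lma:induction}): one picks $s_4\in S\cap C_T$, applies Theorem~\ref{theorem:wood} \emph{again} to $G[V(T)\cup C_T]$ with roots $c_1,c_2,c_3,s_4$, and must then deal with alternative~(2) of that theorem, which is \emph{not} excluded for the subgraph $G[B]$ (only for $G$ itself). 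Handling alternative~(2) requires the Claims~\ref{claim:cspread}--\ref{claim:cutvertex} and the explicit construction of branch sets at the end of the proof of Lemma~\ref{lma:induction}; your outline does not contain anything corresponding to this, and without it the recursion does not close.

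Your endgame also differs from the paper's in a way that makes life harder. You aim for a single planar embedding of $G$ with \emph{all} of $S$ on the outer face, and suggest ``gluing'' webs coming from different $4$-subsets. The webs for different choices of $\{a,b,c,d\}$ need not be compatible in any obvious sense, so this gluing is not well-defined as stated. The paper avoids this entirely: it proves only that for every $4$-subset of $S$ there is \emph{some} planar embedding of $G$ with those four vertices on the outer face, and then concludes via the Kuratowski-based Lemma~\ref{lemma:kuratowski} that $G^a(S)$ is planar. This last step is a clean trick you are missing, and it is precisely what lets one sidestep the consolidation problem you identify.
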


We will prove Lemma \ref{lma:rootyness} by combining Theorem \ref{theorem:wood} with the following statement.
\begin{lemma}\label{lma:induction}
Let $G$ be a 3-connected graph, and let $S\subseteq V(G)$ be spread out in $G$ such that $G$ contains no $S$-rooted $K_4$-minor. Then for any separation $(A, B)$ of order $3$ such that $|A\cap S|\geq 3$, it holds that $G[B]$ has a planar embedding with $A\cap B$ on the outerface. 
\end{lemma}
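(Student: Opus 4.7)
\emph{Proof plan.} The plan is to reduce to a smaller instance by ``collapsing'' $A$. Write $A\cap B=\{x_1,x_2,x_3\}$ and consider the auxiliary graph $G^{\star}:=G[B]+v^{\star}$, obtained by adjoining a new vertex $v^{\star}$ adjacent exactly to $x_1,x_2,x_3$. Set $S^{\star}:=(S\cap B)\cup\{v^{\star}\}$ and let $(A',B'):=(\{v^{\star},x_1,x_2,x_3\},B)$ be the natural order-$3$ separation of $G^{\star}$. Observe that $G^{\star}[B']=G[B]$ and $A'\cap B'=A\cap B$, so the desired conclusion is precisely the conclusion of Lemma~\ref{lma:induction} applied to the triple $(G^{\star},S^{\star},(A',B'))$. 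Since $|V(G^{\star})|<|V(G)|$ whenever $|A\setminus B|\ge 2$, the plan is to prove the lemma by induction on $|V(G)|$, invoking the inductive hypothesis on this smaller triple.

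To apply the inductive hypothesis one must verify: (i) $G^{\star}$ is $3$-connected; (ii) $S^{\star}$ is spread out in $G^{\star}$; (iii) $G^{\star}$ has no $S^{\star}$-rooted $K_4$-minor; and (iv) $|A'\cap S^{\star}|\ge 3$. Items (i) and (ii) should be obtained by lifting any $2$-cut or $3$-separation of $G^{\star}$ back to one of $G$ via the expansion $v^{\star}\mapsto A\setminus B$, using the $3$-connectedness of $G$ together with the connectedness of $G[A]$ (which follows from the separation property). For item (iii), suppose $(B_1^{\star},\ldots,B_4^{\star})$ is an $S^{\star}$-rooted $K_4$-minor in $G^{\star}$, and let $B_i^{\star}$ be the branch set containing $v^{\star}$; since $v^{\star}$'s only neighbors in $G^{\star}$ are the $x_j$ and $B_i^{\star}$ is connected, $B_i^{\star}$ must contain some $x_j$. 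Replacing $B_i^{\star}$ by $(B_i^{\star}\setminus\{v^{\star}\})\cup(A\setminus B)$ produces a connected subset of $G$ (each component of $B_i^{\star}\setminus\{v^{\star}\}$ meets some $x_k$, every $x_k$ has a neighbor in $A\setminus B$ by $3$-connectedness, and $G[A]$ is connected) that contains an $S$-vertex -- either a vertex of $(A\setminus B)\cap S$, or, if this set is empty, then $|A\cap S|\ge 3$ forces $A\cap B\subseteq S$, so the already-present $x_j$ is in $S$. The resulting $S$-rooted $K_4$-minor in $G$ contradicts the hypothesis.

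\emph{Main obstacle.} The delicate point will be item (iv): one has $|A'\cap S^{\star}|=1+|S\cap(A\cap B)|$, so the inductive hypothesis only applies directly when $|S\cap(A\cap B)|\ge 2$. The boundary cases $|S\cap(A\cap B)|\le 1$ -- together with the base case $|A\setminus B|=1$, where the reduction $G\mapsto G^{\star}$ is vacuous, and with any situation in which $G^{\star}$ fails to be $3$-connected due to cut vertices of $G[B]$ -- must be handled by a direct argument. The natural choice is to invoke Theorem~\ref{theorem:wood} on $G$ itself: pick three distinct vertices $a,b,c\in A\cap S$ and one vertex $d\in S\setminus A$, and use the $3$-connectedness of $G$ to exclude Case~(2), so that $G$ is a spanning subgraph of some $\{a,b,c,d\}$-web $H^{+}$. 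The planar embedding of the underlying planar graph $H$, restricted to the $B$-side of the separator $A\cap B$, should then yield the required planar embedding of $G[B]$ with $A\cap B$ on the outer face.
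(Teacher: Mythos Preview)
Your plan has a genuine gap in the fallback argument, and the gap is not peripheral — it is exactly where the real work lies.

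\medskip
\textbf{Where the collapsing reduction breaks.} Your reduction $G\mapsto G^{\star}$ only fires when all of (i)--(iv) hold. Condition (iv), $|S\cap(A\cap B)|\ge 2$, is \emph{not} the generic situation: nothing in the hypotheses forces $S$ to touch the separator at all, so the ``boundary case'' $|S\cap(A\cap B)|\le 1$ is in fact the main case. Condition (i) also fails whenever $G[B]$ has a cut vertex $z$ (then $\{v^{\star},z\}$ is a $2$-cut of $G^{\star}$), and your lifting argument $v^{\star}\mapsto A\setminus B$ does not produce a $2$-cut of $G$ in that situation. So essentially all of the content is pushed into your ``direct argument''.

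\medskip
\textbf{Why the proposed direct argument does not deliver.} You suggest applying Theorem~\ref{theorem:wood} to $G$ with roots $a,b,c\in A\cap S$ and $d\in S\setminus A$. It is true that $3$-connectedness of $G$ kills Case~(2), so $G$ sits inside an $\{a,b,c,d\}$-web $H^{+}$. But now the web is aligned with $\{a,b,c,d\}$, not with the separator $\{x_1,x_2,x_3\}$; in the bad case $a,b,c\in A\setminus B$ the $x_i$ can sit anywhere in $H^{+}$, possibly inside different clique-bags $C_T$. Your assertion that ``restricting to the $B$-side'' of $H$ yields a planar drawing of $G[B]$ with $A\cap B$ on the outer face is unsupported: the bags $C_T$ lying inside $B$ are cliques and are not planar, so you would still need an inductive planarisation of each $G[V(T)\cup C_T]$. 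Your induction parameter $|V(G)|$ does not obviously decrease for these sub-separations (and the base case $|A\setminus B|=1$ is not handled either).

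\medskip
\textbf{What the paper does instead.} The paper inducts on $|B|$ and applies Theorem~\ref{theorem:wood} to $G[B]$ with the four roots $c_1,c_2,c_3,s_4$, where $\{c_1,c_2,c_3\}=A\cap B$ and $s_4\in S\cap(B\setminus A)$. The point is that rooting at the \emph{separator} vertices makes the resulting web align perfectly with the separation: the outer face of $H$ already contains $c_1,c_2,c_3$, and each bag $C_T$ gives a strictly smaller $3$-separation $(A\cup V(H),\,V(T)\cup C_T)$ of $G$ to which the inductive hypothesis on $|B|$ applies. To justify that $G[B]$ has no $\{c_1,c_2,c_3,s_4\}$-rooted $K_4$-minor, one first builds (via Menger in $G[A]$) a ``cherry'' of three disjoint connected sets $A_i\ni\{s_i,c_i\}$ with $A_2$ adjacent to $A_1$ and $A_3$; this structure is what lets one promote a $\{c_1,c_2,c_3,s_4\}$-rooted $K_4$-minor of $G[B]$ to an $S$-rooted one of $G$. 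Finally, since $G[B]$ need not be $3$-connected, Case~(2) of Theorem~\ref{theorem:wood} can occur for $G[B]$; the paper disposes of it (and of the cut-vertex case) by explicit constructions, again using the cherry. These are the missing ideas in your plan.
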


\begin{proof}[Proof of Lemma~\ref{lma:induction}]

We prove Lemma \ref{lma:induction} by induction on the size of $B$. Note that for any separation, we have $|B|\geq 4$. Moreover, for $|B|=4$, it is clear that such a planar embedding exists. Turning to the induction step, we fix any separation $(A, B)$ as above with $|B|\geq 5$, and assume the statement is true for any separation $(A', B')$ as above with $|B'|<|B|$. Furthermore, we pick some $S' \in \binom{A \cap S}{3}$.

\begin{claim}\label{claim:connected} $G[A]$ and $G[B]$ are connected.
\end{claim}
\begin{claimproof}

By Menger's theorem, any $v\in A\setminus B$  is connected to $A\cap B$ in $G$ by three paths that pairwise only share $v$ as a common vertex. These paths are completely contained in $A$ as $A\cap B$ is a separator, so, in particular, $G[A]$ is connected. Similarly, any $v\in B\setminus A$ is connected to $A\cap B$ by three internally disjoint paths in $G$, so $G[B]$ is also connected. 
\end{claimproof}
\begin{claim}\label{claim:cherryminor}
There exist enumerations $\{s_1, s_2, s_3\}=S'$ and $\{c_1, c_2, c_3\} = A\cap B$ and disjoint sets $A_1, A_2, A_3 \subseteq A$ such that $\{s_i, c_i\}\subseteq A_i$ and $G[A_i]$ is connected for each $i \in [3]$ and such that $A_2$ is adjacent to $A_1$ and $A_3$ in $G$.
\end{claim}
\begin{claimproof}
Since $G$ is $3$-connected, there exists three vertex-disjoint paths from $S'$ to $A\cap B$. Let $P_1, P_2, P_3$ be an ordering of the paths such that $\text{dist}_{G[A]}(V(P_1), V(P_2))+\text{dist}_{G[A]}(V(P_2), V(P_3))$ is minimized. Then letting $A_1 = V(P_1)$, $A_3=V(P_3)$ and $A_2$ equal the connected component of $G[A\setminus (V(P_1) \cup V(P_3))]$ containing $V(P_2)$, we obtain sets with the desired properties (by the minimality assumption a shortest path from $V(P_2)$ to $V(P_i)$ for $i \in \{1,3\}$ does not intersect $V(P_{4-i})$, certifying that $A_2$ is adjacent to $A_i$).
\end{claimproof}

We will below let $s_1, s_2, s_3$ and $c_1, c_2, c_3$ denote the orderings of the respective sets, as prescribed by Claim \ref{claim:cherryminor}.% We remark that the proof below will never need that $A_1$ and $A_2$ are adjacent, 

%Let us now turn to investigating $G[B]$.
\begin{claim}\label{claim:cspread}
Let $X\subseteq B$ be such that $|X| \in \{1,2\}$. Any connected component of $G[B\setminus X]$ contains at least one of the vertices $c_1, c_2, c_3$.
\end{claim}
\begin{claimproof}
$G-X$ is connected by $3$-connectivity of $G$. Thus for any $v\in B\setminus X$ there exists a shortest path from $v$ to $(A\cap B)\setminus X$ in $G-X$. By minimality, this path is contained in $B$. Hence $v$ is in the same connected component of $G[B\setminus X]$ as the end-point of this path.
\end{claimproof}

\begin{claim}\label{claim:cutvertex} If $G[B]$ has a cut-vertex, then $G[B]$ has a planar embedding with $A\cap B$ on its outerface.
\end{claim}
\begin{claimproof}
%If $|B|=4$ the prescribed planar embedding clearly exists, so we can assume that $|B|\geq 5$. REMOVED as we already assume |B|>=5.

Let $x$ be a cut-vertex in $G[B]$. By Claim \ref{claim:cspread}, one of $c_1, c_2, c_3$ has to be in a connected component of $G[B\setminus \{x\}]$ that does not contain the other two vertices. We may w.l.o.g. assume that $c_1$ has this property. As each vertex in $G[B\setminus\{x,c_1\}]$ is in the same connected component as $c_2$ or $c_3$, it follows that $c_1$ is isolated in $G[B\setminus\{x\}]$, hence $x$ is the only neighbor of $c_1$ in $G[B]$.

It is straight-forward to see that $(A\cup \{c_1\}, B\setminus\{c_1\})$ is a separation in $G$ with separator $\{x, c_2, c_3\}$. By the induction hypothesis, $G[B\setminus\{c_1\}]$ has a planar embedding with $x, c_2, c_3$ on the outerface, which by adding $c_1$ to the outerface of the embedding and connecting it to $x$ can be extended to a planar embedding of $G[B]$ with $c_1,c_2,c_3$ on the outerface.

%As, by $3$-connectivity of $G$, any vertex in $B\setminus A$ is connected to $c_1, c_2$ and  $c_3$ in $G[B]$, so the cut-vertex cannot be in $A\cap B$. Moreover, the cut vertex must separate one of $c_1, c_2, c_3$ from the other two, as if there is a connected component of $G[B-x]$ that does not contain a vertex from $A\cap B$, then $x$ would also be a cut-vertex in $G$. Let us, without loss of generality, assume that $x$ separates $c_1$ from $c_2$ and $c_3$. Note that if $c_1$ is non-isolated in $G[B]$, then $\{x, c_1\}$ would be a $2$-separator in $G$, which is impossible. Thus, we conclude that $x$ is the only neighbor of $c_1$ in $G[B]$. But then $(A\cup\{x\}, B-c_1)$ is a separation with separator $\{x, c_2, c_3\}$. As $|B-c_1|<|B|$, it follows from the induction hypothesis that $G[B-x]$ has a planar embedding with $x, c_2, c_3$ on the outerface. By taking this embedding, adding a vertex $c_1$ on the outside, and connecting it to $x$ by an edge yields the desired planar embedding of $G[B]$.
\end{claimproof}

It remains to consider the case where $G[B]$ is 2-connected and $|B|\geq 5$. For this we will use Theorem \ref{theorem:wood}.

Let $s_4 \in S\cap (B\setminus A)$ (such a vertex exists since $S$ is spread out in $G$). Observe that $G[B]$ has no $K_4$-minor rooted at $\{c_1, c_2, c_3, s_4\}$ as then using Claim \ref{claim:cherryminor} we could immediately extend it to a $K_4$-minor rooted at $S$ (replace any occurrence of a vertex $c_i$ in a branch-set by the connected set $A_i \supseteq \{s_i,c_i\}$ in $A$). Hence, $G[B]$ must satisfy either Cases $(1)$ or $(2)$ of Theorem \ref{theorem:wood}.

Let us first consider Case $(1)$. Then $G[B]$ is a spanning subgraph of a $\{c_1,c_2,c_3,s_4\}$-web $H^+$. Concretely, $H$ is a planar graph that admits a planar embedding with outerface $\{c_1,c_2,c_3,s_4\}$ (in some possibly permuted order) and such that every triangle $T$ in $H$ forms a face in this embedding. Further, for every triangle $T$ in $H$ there is a disjoint (possibly empty) set $C_T$ of vertices in $H^+$ such that $H^+$ is formed from $H$ by making $C_T$ a clique fully connected to the three vertices of $T$ for every triangle $T$.

Now note that for every triangle $T$ in $H$ for which $C_T \neq \emptyset$, we have that $(V(H),V(T) \cup C_T)$ forms a separation of order $3$ in $G[B]$ with separator $V(T)$. Further, $(A \cup V(H), V(T) \cup C_T)$ is a separation of order $3$ in the whole graph $G$. Since $H$ contains at least $4$ vertices, we have $|V(T) \cup C_T|<|B|$ for every triangle $T$ in $H$ and now the inductive assumption implies that $G[V(T) \cup C_T]$ admits a planar embedding with $V(T)$ on its outerface, for every triangle $T$ in $H$. It is now easy to see that we can take the planar embedding of $H$ as prescribed above, and for each triangular face $T$ with $C_T \neq \emptyset$ glue into it an embedding of $G[V(T) \cup C_T]$ with $V(T)$ on its outerface, to overall reach a planar embedding with $c_1,c_2,c_3$ (and in fact also $s_4$) on its outerface. After deleting some edges from this embedding we reach a planar embedding of $G[B]$ with $c_1,c_2,c_3$ on the outerface. This is the desired outcome and concludes the proof in this case.

It only remains to consider Case $(2)$. By Claims \ref{claim:connected} and \ref{claim:cutvertex} we may assume that $G[B]$ is $2$-connected. Let $\{x, y\} \subseteq B$ be the prescribed cut such that $G[B\setminus \{x,y\}]$ has at least $3$ distinct connected components. As by Claim \ref{claim:cspread}, any connected component of $G[B\setminus\{x,y\}]$ contains a vertex among $c_1, c_2, c_3$, we conclude that $G[B\setminus\{x,y\}]$ has exactly three connected components $C_1, C_2, C_3$ where we may w.l.o.g. assume $c_i\in C_i$ for $i=1, 2, 3$.

To conclude the induction proof, we will show that these assumptions are sufficient to find branch sets $c_1\in B_1, c_2\in B_2, c_3\in B_3, s_4\in B_4 \subseteq B$ such that any pair of sets except for possibly $B_2$ and $B_3$ are adjacent. Combining this with Claim \ref{claim:cherryminor}, this would imply the existence of an $S$-rooted $K_4$-minor (with branch-sets $A_1 \cup B_1, A_2 \cup B_2, A_3 \cup B_3$ and $B_4$), hence concluding the proof. 

As $G[B]$ is $2$-connected, there exist for each $i=1, 2, 3$ two internally disjoint paths $P_i, Q_i$ in $G[B]$ with endpoints $c_i, x$ and $c_i, y$, respectively. Then as $\{x, y\}$ separates $C_i$ from the rest of $B$, we must have $V(P_i), V(Q_i) \subseteq C_i\cup \{x, y\}$. Let $X:=\bigcup_{i=1}^{3}{(V(P_i)\setminus \{c_i\})}$ and $Y:=\bigcup_{i=1}^{3}{(V(Q_i)\setminus \{c_i\})}$. Note that $G[X]$ and $G[Y]$ are connected. If $s_4$ is already contained in either $X$ or $Y$, let us w.l.o.g. assume $X$, then $B_1=\{c_1\}\cup Y, B_2 = \{c_2\}, B_3=\{c_3\}$ and $B_4=X$ are vertex sets with the desired properties. Otherwise $s_4$ is contained in some component $C_i$. Note that by choice of $s_4$ it is distinct from $c_i$. Let $P$ be a shortest path from $s_4$ to $X\cup Y$ in $G[B\setminus\{c_i\}]$. Such a path exists as $G[B]$ is $2$-connected. Let us, w.l.o.g., assume that the end-point of $P$ is in $X$. Then similar to before $B_1=\{c_1\}\cup Y, B_2 = \{c_2\}, B_3=\{c_3\}$ and $B_4=X\cup V(P)$ have the desired properties. This concludes the proof of Lemma \ref{lma:induction}. 
\end{proof}

%If $G[B]$ is in either of $\mathcal{A}$ or $\mathcal{E}$, then it can be written as the union of three graphs $G[B_1], G[B_2], G[B_3]$ all pairwise intersecting in a pair of vertices $u, v$ distinct from $c_1, c_2$ and $c_3$. By Menger's Theorem there exists, for each $i=1, 2, 3$, a pair of internal vertex-disjoint paths from $c_i$ to $u$ and $v$ in $G[B]$. As $\{u, v\}$ separates $c_1, c_2, c_3$ in this graph, for each $i=1, 2, 3$, the paths connected to $c_i$ have to be completely contained in $B_i$.

%Now either $s_4$ is contained in one of these paths (recall that by choice of $s_4$ it cannot be equal to one of $c_1, c_2$ or $c_3$), or it is contained in some set $B_i$ but not on any path. By 2-connectivity of $G[B]$, $s_4$ has to be in the same connected component as some path vertex in $G[B_i-c_i]$. In either case, it follows that $G[B]$ contains a $K_{3,2}$-minor with branch sets containing $c_1, c_2$ and $c_3$ respectively on the $3$-vertex side, and one of the branch sets of the $2$-vertex side containing $s_4$. By further contracting $c_1$ with the branch set without an annotated vertex, it follows that $G[B]$ contains as ($K_4-e$)-minor rooted at $c_1, c_2, c_3, s_4$ where the non-edge goes between the sets containing $c_2$ and c_3$. But then by Claim \ref{claim:cherryminor}, this extends to a $K_4$-minor rooted at $s_1, s_2, s_3, s_4$, which contradicts the assumptions on $G$. This concludes the proof of Lemma \ref{lma:induction}. 
Before giving the proof of Lemma~\ref{lma:rootyness} it will be useful to observe the following simple statement about planar graphs. 

\begin{lemma}\label{lemma:kuratowski}
Let $G$ be a planar graph and let $S \subseteq V(G)$ be a set with $|S|\ge 4$. Suppose that for every choice of vertices $s_1,s_2,s_3,s_4 \in S$ there is a planar embedding of $G$ in which $s_1,\ldots,s_4$ are on the boundary of the outerface. Then the graph $G^a(S)$, obtained from $G$ by adding a new vertex whose neighbors are the elements of $S$, is planar.
\end{lemma}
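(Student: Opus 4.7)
The plan is to argue by contradiction using Kuratowski's theorem. Suppose $G^a(S)$ is nonplanar, and let $a$ denote the added apex vertex. Then $G^a(S)$ contains a subgraph $K$ that is a subdivision of $K_5$ or $K_{3,3}$. Since $G$ itself is planar by assumption, $K$ cannot be contained entirely in $G$, so $a \in V(K)$. Moreover, every edge of $G^a(S)$ incident to $a$ has its other endpoint in $S$, so the neighborhood $T := N_K(a)$ is a subset of $S$. As $a$ is either a branch vertex of $K$ (of degree $3$ in a $K_{3,3}$-subdivision or $4$ in a $K_5$-subdivision) or an internal subdivision vertex (of degree $2$), we have $|T| \in \{2,3,4\}$.

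Since $|S| \ge 4$, I can extend $T$ to a set $S' \subseteq S$ of size exactly $4$. By the hypothesis of the lemma, $G$ admits a planar embedding in which the four vertices of $S'$ lie on the boundary of the outerface. Placing a new vertex $a'$ in the outerface of such an embedding and joining it to all of $S'$ produces a planar embedding of the graph $G^a(S')$; in particular, $G^a(S')$ is planar.

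Now the final step is to transfer the Kuratowski subgraph from $G^a(S)$ to $G^a(S')$ by replacing $a$ with $a'$. Concretely, let $K'$ be obtained from $K$ by deleting $a$ and adding $a'$ together with the edges $\{a's : s \in T\}$. Since $T \subseteq S'$, all of these edges are present in $G^a(S')$, and since $a'$ plays exactly the same role in $K'$ as $a$ did in $K$, the graph $K'$ is again a subdivision of $K_5$ or $K_{3,3}$. This contradicts the planarity of $G^a(S')$ established in the previous paragraph, finishing the proof. I do not expect a real obstacle here: the argument is essentially a bookkeeping exercise combining Kuratowski's theorem with the standard fact that adding an apex to a set of at most four vertices preserves planarity as soon as that set can be placed cofacially in some embedding.
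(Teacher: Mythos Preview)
Your argument is correct and is essentially the same as the paper's: both use Kuratowski's theorem, observe that the apex vertex has degree at most~$4$ in any $K_5$- or $K_{3,3}$-subdivision, and deduce that the subdivision already lives in $G^a(S')$ for some $4$-subset $S'\subseteq S$, which is planar by the cofacial hypothesis. The only difference is presentational---you spell out the relabeling $a\mapsto a'$ explicitly, while the paper phrases it as ``the subdivision has maximum degree at most~$4$, hence is contained in some $G^a(s_1,s_2,s_3,s_4)$''.
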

\begin{proof}
We claim that for every 4-subset $\{s_1,s_2,s_3,s_4\} \subseteq S$, the graph $G^a(s_1,s_2,s_3,s_4)$, obtained by adding a new vertex to $G$ with neighborhood $\{s_1,s_2,s_3,s_4\}$, is planar. Indeed, given a planar embedding with $s_1,s_2,s_3,s_4$ on the outerface, we can simply embed the new vertex in the outerface and preserve a planar embedding. Now, suppose towards a contradiction that $G^a(S)$ is non-planar. By Kuratowksi's theorem, this means that $G^a(S)$ contains a subdivision of $K_5$ or of $K_{3,3}$. Fix one such subdivision of a Kuratowski graph. It is a subgraph of $G^a(S)$ of maximum degree at most $4$. But then it has to be also contained in a graph $G^a(s_1,s_2,s_3,s_4)$ for some $\{s_1,s_2,s_3,s_4\} \subseteq S$, a contradiction, since the latter graph is planar.
\end{proof}

\begin{proof}[Proof of Lemma \ref{lma:rootyness}.]
Take any four vertices $s_1, s_2, s_3, s_4\in S.$ By Theorem \ref{theorem:wood} it follows that if there is no $K_4$-minor rooted at these four vertices, then as $G$ is $3$-connected it must satisfy Case $(2)$ with $s_1, s_2, s_3, s_4$ on the outerface. Concretely, $G$ is a spanning subgraph of an $\{s_1,s_2,s_3,s_4\}$-web $H^+$. This means $H$ is a graph with a planar embedding with outerface $\{s_1,s_2,s_3,s_4\}$ (in some order) and every triangle $T$ in $H$ forms a face in this embedding. Further, for every triangle $T$ in $H$ there is a disjoint set $C_T$ of vertices in $H^+$ such that $H^+$ is formed from $H$ by making $C_T$ a clique fully connected to the three vertices of $T$ for every triangle $T$. Since for every triangle $T$ in $H$ we have that $(V(H),V(T) \cup C_T)$ forms a $3$-separation in $G$, by applying Lemma \ref{lma:induction} we find for every triangle $T$ in $H$ a planar embedding of $G[V(T)\cup C_T]$ with $V(T)$ on the outerface. Hence, by taking the prescribed planar embedding of $H$ and glueing for every triangle $T$ a planar embedding of $G[V(T) \cup C_T]$ as above into the corresponding face, we find a planar embedding of a supergraph of $G$ that has $s_1, s_2, s_3, s_4$ on the outerface. This yields a planar embedding also of $G$ with $s_1,\ldots,s_4$ on the outerface.

Since our choice of $s_1,s_2,s_3,s_4$ was arbitrary, Lemma~\ref{lemma:kuratowski} implies that $G^a(S)$ is planar. 
\end{proof}

Equipped with Lemma~\ref{lma:rootyness} we can now present the proof of our main result.

\begin{proof}[Proof of Theorem~\ref{thm:main}]
We prove the theorem by contradiction. Suppose (reductio ad absurdum) that a graph $G$ is a smallest counterexample (in terms of $|V(G)|$). Then $\chi(G)=4$, and there is a colorful set $S$ of vertices in $G$ such that $G$ contains no $S$-rooted $K_4$-minor. We start by establishing some simple facts concerning the connectivity of $G$. 
\begin{claim}
$G$ is connected. 
\end{claim}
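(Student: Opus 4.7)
The plan is to argue by contradiction: suppose $G$ has connected components $G_1, \ldots, G_k$ with $k \ge 2$, and set $S_i := S \cap V(G_i)$ for each $i \in [k]$. The goal is to identify a single component that is itself a counterexample to Theorem~\ref{thm:main}, thereby violating the minimality of $G$.

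The key step is to establish that \emph{some} component $G_i$ satisfies both $\chi(G_i) = 4$ and ``$S_i$ is colorful in $G_i$''. Since $\chi(G) = \max_i \chi(G_i) = 4$, at least one component has chromatic number $4$. Suppose for contradiction that no component has both properties. For each $i$ with $\chi(G_i) = 4$, pick a proper $4$-coloring of $G_i$ for which some color is missing on $S_i$, and permute color labels within $G_i$ so that the missing color is $4$. For each $i$ with $\chi(G_i) \le 3$, color $G_i$ properly using only colors from $\{1,2,3\}$. The union of these colorings is a proper coloring of $G$ that uses all four colors (some component of chromatic number $4$ forces color $4$ to appear) and that avoids color $4$ on every vertex of $S$, contradicting that $S$ is colorful in $G$.

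Once such a component $G_i$ is found, the contradiction is immediate: $|V(G_i)| < |V(G)|$ because $G$ is disconnected, so by minimality of the counterexample $G_i$ must contain an $S_i$-rooted $K_4$-minor, which is also an $S$-rooted $K_4$-minor in $G$, contradicting the standing assumption on $G$. I do not anticipate a real obstacle here; the only subtlety is checking that the assembled coloring of $G$ is a genuine proper $4$-coloring that actually uses all four colors (as required by the definition of ``colorful''), but this is automatic because any component with $\chi = 4$ contributes all four colors on its own.
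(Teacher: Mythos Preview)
Your proof is correct and follows essentially the same approach as the paper: both exploit minimality on the pieces of a disconnection and assemble proper $4$-colorings that each miss a colour on $S$ to contradict colourfulness. The only cosmetic difference is that you iterate over all components and finish by locating an $S_i$-rooted $K_4$-minor in one of them, whereas the paper uses a bipartition $(A,B)$ and stops directly at the bad colouring of $G$.
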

\begin{claimproof}
Suppose not, then there exists a partition $(A,B)$ of $V(G)$ with $A, B \neq \emptyset$ such that no edge in $G$ connects $A$ and $B$. Then $\chi(G[A]), \chi(G[B]) \le 4$ and $G[A]$ and $G[B]$ respectively do not contain a $K_4$-minor rooted at $S \cap A$ and $S \cap B$ respectively. By minimality of $G$, there exist proper $4$-colorings $c_A:A \rightarrow [4]$ and $c_B:B \rightarrow [4]$ such that $S \cap A$ does not contain all $4$ colors in $c_A$, and $S \cap B$ does not contain all $4$ colors in $c_B$. By permuting colors we may assume w.l.o.g. that $c_A(S \cap A) \subseteq \{2,3,4\}$ and $c_B(S \cap B) \subseteq \{2,3,4\}$. But then putting together $c_A$ and $c_B$ yields a proper coloring of $G$ in which no vertex in $S$ receives color $1$, a contradiction to $S$ being colorful. 
\end{claimproof}
\begin{claim}
$G$ is $2$-connected. 
\end{claim}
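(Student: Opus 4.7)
\begin{claimproof}
The plan is to follow the strategy of the preceding claim and construct a proper $4$-coloring of $G$ whose image misses some color on $S$, contradicting the colorfulness of $S$. Suppose for contradiction that $G$ has a cut-vertex $v$, and fix a separation $(A,B)$ of $G$ with $A\cap B=\{v\}$ and $A\setminus B, B\setminus A\ne \emptyset$. Since $G[A]$ and $G[B]$ are strictly smaller than $G$ and each inherits from $G$ the absence of an $(S\cap A)$- (resp.\ $(S\cap B)$-) rooted $K_4$-minor, minimality of $G$ yields proper $4$-colorings $c_A\colon A\to[4]$ and $c_B\colon B\to[4]$ with $c_A(S\cap A)\subsetneq[4]$ and $c_B(S\cap B)\subsetneq[4]$. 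Choose missing colors $\alpha\notin c_A(S\cap A)$ and $\beta\notin c_B(S\cap B)$. To glue $c_A$ with $\pi\circ c_B$ into a proper $4$-coloring of $G$ missing $\alpha$ on $S$, one needs a permutation $\pi$ of $[4]$ satisfying $\pi(c_B(v))=c_A(v)$ and $\pi(\beta)=\alpha$; such a $\pi$ exists precisely when the \emph{patterns} $(c_A(v),\alpha)$ and $(c_B(v),\beta)$ lie in the same orbit of the $S_4$-action on $[4]\times[4]$, equivalently when $c_B(v)=\beta \Leftrightarrow c_A(v)=\alpha$.

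If $v\in S$, the inclusions $c_A(v)\in c_A(S\cap A)$ and $c_B(v)\in c_B(S\cap B)$ force $c_A(v)\ne \alpha$ and $c_B(v)\ne \beta$, so the compatibility holds automatically. The case $v\notin S$ with $S\subseteq B$ (symmetrically $S\subseteq A$) is also quick: $\chi(G[B])=4$ must hold, since otherwise any proper $3$-coloring of $G[B]$ extends to a $4$-coloring of $G$ leaving the fourth color unused on $S$; minimality then furnishes a $4$-coloring of $G[B]$ missing a color on $S$ that extends to $G$ by any compatible $4$-coloring of $G[A]$ with $c(v)=c_B(v)$. With these cases dispatched, the remaining scenario is $v\notin S$ with $S$ meeting both $A\setminus v$ and $B\setminus v$.

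In this scenario the compatibility can fail only when the two patterns lie in distinct orbits (diagonal versus off-diagonal). The plan is to use Kempe-chain recolorings at $v$ to change $v$'s color while preserving the missing color: since $v\notin S$, for any $k$ such that the $(c_A(v),k)$-Kempe component of $v$ in $G[A]$ avoids $S\cap A$, swapping the two colors within the component yields a new proper $4$-coloring $c_A'$ of $G[A]$ with $c_A'(v)=k$ and $c_A'(S\cap A)=c_A(S\cap A)$, thereby replacing the $A$-pattern $(c_A(v),\alpha)$ by $(k,\alpha)$; an appropriate choice of $k$ moves the pattern to the desired orbit, and the same holds on the $B$-side. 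If such a swap is available on at least one side we align orbits and glue, so the main obstacle is the fully rigid subcase where no such swap exists on either side. In that subcase, for every color $k\ne c_A(v)$ the corresponding Kempe component of $v$ in $G[A]$ meets $S\cap A$, and similarly in $G[B]$; this yields three vertices of $S\cap A$ of three distinct colors, each joined to $v$ by an alternating Kempe path in $G[A]$ (and analogous data in $G[B]$). Combined with a path in $G[B]$ from $v$ to some $s_1\in S\cap B$, I expect these paths to assemble into an $S$-rooted $K_4$-minor in $G$---one branch-set taking $v$ together with the $v$-to-$s_1$ path in $B$, and three branch-sets in $A$ carved from the Kempe paths---contradicting the choice of $G$. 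The challenge is to verify that the branch-sets so produced can be made pairwise disjoint and pairwise adjacent using the rigid Kempe-chain structure.
\end{claimproof}
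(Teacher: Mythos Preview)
Your argument is incomplete: in the final ``fully rigid'' subcase you explicitly write ``I expect these paths to assemble into an $S$-rooted $K_4$-minor'' and ``The challenge is to verify that the branch-sets so produced can be made pairwise disjoint and pairwise adjacent.'' That is not a proof but a hope, and the hope is not obviously realizable. The three Kempe paths from $v$ into $G[A]$ all pass through $v$ and may further share vertices of color $c_A(v)$ with one another, so after deleting $v$ you do not in general obtain three pairwise disjoint connected sets; even if you did, nothing in the Kempe structure forces the three $A$-side pieces to be pairwise adjacent. Your description of the rigid case is also looser than needed: on the side where $c_X(v)\neq$ (missing color), only \emph{one} specific Kempe swap is relevant, not three, so the symmetric ``three Kempe paths on each side'' picture you sketch does not arise.

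The paper avoids all of this with a one-line trick that you should adopt. In the case $S\cap A\neq\emptyset\neq S\cap B$, enlarge the root set on each side by $v$: set $S_A:=(S\cap A)\cup\{v\}$ and $S_B:=(S\cap B)\cup\{v\}$. One checks that $G[A]$ has no $S_A$-rooted $K_4$-minor, since in any such minor the branch-set containing $v$ can be extended along a path in $G[B]$ from $v$ to a vertex of $S\cap B$, producing an $S$-rooted $K_4$-minor in $G$; symmetrically for $G[B]$. Minimality (together with the trivial case $\chi<4$) then yields proper $4$-colorings $c_A,c_B$ each missing color $1$ on $S_A$, $S_B$ respectively; in particular $c_A(v)\neq 1\neq c_B(v)$, so a $1$-fixing permutation of $[4]$ aligns $c_B(v)$ with $c_A(v)$ and the glued coloring misses color $1$ on $S$. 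This is exactly your ``compatibility'' condition, obtained for free by putting $v$ into the root set rather than by Kempe recoloring.
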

\begin{claimproof}
Towards a contradiction, suppose there exists a separation $(A,B)$ of $G$ of order $|A \cap B|=1$. Write $A \cap B=\{v\}$. Clearly, $\chi(G[A]), \chi(G[B]) \le 4$. 

Suppose first that $S \cap A =\emptyset$ or $S \cap B=\emptyset$. By symmetry, we may assume w.l.o.g. that the latter holds, i.e. $S \subseteq A\setminus B$. Then $G[A]$ contains no $S$-rooted $K_4$-minor, hence there is a proper $4$-coloring $c_A:A \rightarrow [4]$ of $G[A]$ in which no vertex in $S$ receives color $1$. Let $c_B:B \rightarrow [4]$ be a proper $4$-coloring of $G$. Possibly after permuting colors in $c_B$ we have $c_A(v)=c_B(v)$, and then the common extension of $c_A$ and $c_B$ to $G$ forms a proper $4$-coloring in which no vertex in $S$ receives color $1$, a contradiction.

Moving, suppose that $S \cap A \neq\emptyset \neq  S \cap B$. We claim that $G[A]$ does not contain an $(S \cap A) \cup \{v\}$-rooted $K_4$-minor. Indeed, if $G[A \cup \{v\}]$ contains a $K_4$-minor $(B_i)_{i=1}^{4}$ rooted at $(S \cap A) \cup \{v\}$, then one of the branch-sets must contain $v$ (for otherwise $(B_i)_{i=1}^{4}$ is an $S$-rooted $K_4$-minor in $G$), say $v \in B_1$. By connectivity, there exists a path $P$ in $G$ connecting $v$ to a vertex in $S \cap B$, and $V(P) \subseteq B$. Then $(B_1 \cup V(P), B_2, B_3, B_4)$ form the branch-set of an $S$-rooted $K_4$-minor in $G$, a contradiction. The symmetric argument works to show that $G[B]$ contains no $(S \cap B) \cup \{v\}$-rooted $K_4$-minor.

By minimality of $G$, it follows that there exist proper $4$-colorings $c_A:A \rightarrow [4]$ of $G_A$ and $c_B:B \rightarrow [4]$ of $G_B$ such that $(S \cap A) \cup \{v\}$ and $(S \cap B) \cup \{v\}$ do not contain a vertex of color $1$. Then $c_A(v) \neq 1 \neq c_B(v)$, and by applying a suitable $1$-invariant permutation of the colors in $c_B$ we may assume w.l.o.g. that $c_A(v)=c_B(v)$. Now, the common extension of $c_A$ and $c_B$ is a proper coloring of $G$ in which no vertex in $S$ receives color $1$, a contradiction.
\end{claimproof}
\begin{claim}
$G$ is $3$-connected. 
\end{claim}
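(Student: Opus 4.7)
The proof proceeds by contradiction, extending the template of the previous two connectivity claims. Suppose $G$ has a separation $(A,B)$ of order $2$ with $A \cap B = \{u,v\}$. From 2-connectivity of $G$ and Menger's theorem, both $G[A]$ and $G[B]$ are connected, and each side contains a $u$--$v$ path whose internal vertices lie entirely in $A \setminus B$ respectively $B \setminus A$; call these paths $P_A$ and $P_B$. The auxiliary graphs to examine are $G_A := G[A] + uv$ and $G_B := G[B] + uv$ (adding a virtual edge if not already present), both strictly smaller than $G$ and thus amenable to the minimality assumption. Since $\chi(G) \leq 4$, a fixed 4-coloring of $G$ either satisfies $c(u) \neq c(v)$, in which case $\chi(G_A), \chi(G_B) \leq 4$, or satisfies $c(u) = c(v)$, in which case the contracted graphs $G[A]/uv$ and $G[B]/uv$ are 4-colorable. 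I would carry out the argument in the former case in detail; the latter is analogous after replacing $G_A, G_B$ with the contracted graphs and identifying $u$ with $v$ in the rooted set.

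As in the 2-connectivity claim, the proof splits on the interaction of $S$ with the separator. If, WLOG, $S \cap (B \setminus A) = \emptyset$ (so $S \subseteq A$), then $G_A$ has no $S$-rooted $K_4$-minor: any such minor either lies within $G[A] \subseteq G$, or uses the virtual edge $uv$, in which case absorbing $V(P_B) \setminus \{v\}$ into the branch-set containing $u$ yields an $S$-rooted minor in $G$, contradicting the choice of $G$. Minimality (or a trivial argument if $\chi(G_A) < 4$) then produces a 4-coloring $c_A$ of $G_A$ missing some color on $S$, WLOG color $1$, with $c_A(u) \neq c_A(v)$ since $uv$ is an edge of $G_A$. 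Any 4-coloring $c_B$ of $G_B$ can then be permuted to agree with $c_A$ on $\{u,v\}$ (both pairs are distinct), and the combined coloring is a 4-coloring of $G$ missing color $1$ on $S$, contradicting colorfulness.

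The more delicate subcase is when both $S \cap (A \setminus B)$ and $S \cap (B \setminus A)$ are non-empty. I would then work with the augmented rooted sets $T_A := (S \cap A) \cup \{u,v\}$ and $T_B := (S \cap B) \cup \{u,v\}$. One shows that $G_A$ has no $T_A$-rooted $K_4$-minor: a branch-set containing $u$ or $v$ but disjoint from $S$ can be extended through $G[B]$ to hit $S \cap B$, and the necessary vertex-disjoint extension paths are supplied by 2-connectivity of $G$ (using the observation that any 1-separator of $G[B]$ between $\{u,v\}$ and $S \cap B$ would also be a 1-separator of $G$, which is impossible). Symmetrically for $G_B$. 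Minimality then yields 4-colorings $c_A, c_B$ of $G_A, G_B$ missing some colors on $T_A, T_B$ respectively. The main obstacle is the resulting alignment: after permuting $c_B$ so that $c_B|_{\{u,v\}} = c_A|_{\{u,v\}}$, exactly one residual color-permutation remains available, namely the transposition of the two colors not used on $\{u,v\}$. Since each missing color necessarily lies in the 2-element set $[4] \setminus \{c_A(u), c_A(v)\}$ (as $u, v \in T_A \cap T_B$ forces the missing color on either side to avoid both $c(u)$ and $c(v)$), this residual swap can be chosen to make the two missing colors coincide. The combined 4-coloring of $G$ then misses a common color on $S \subseteq T_A \cup T_B$, contradicting colorfulness and completing the proof.
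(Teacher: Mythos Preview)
Your overall strategy matches the paper's, and your colour-alignment trick in the ``both sides'' case---forcing the missing colour on each side to avoid $c(u)$ and $c(v)$ by including $u,v$ in the rooted set, so that the two missing colours lie in a common $2$-set and can be matched by the remaining transposition---is exactly the device the paper uses in its Case~1. But your case split is coarser than the paper's, and this causes a genuine gap.

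The problem is your claim that $G_A$ has no $T_A$-rooted $K_4$-minor when both $S\cap(A\setminus B)$ and $S\cap(B\setminus A)$ are non-empty. Suppose $u,v\notin S$ and $|S\cap(B\setminus A)|=1$, say $S\cap(B\setminus A)=\{s\}$. If $(B_1,\dots,B_4)$ is a $T_A$-rooted $K_4$-minor in $G_A$, it may happen that $B_1\cap T_A=\{u\}$ and $B_2\cap T_A=\{v\}$, with $B_3,B_4$ meeting $S\cap(A\setminus B)$. To turn this into an $S$-rooted $K_4$-minor in $G$ you must extend \emph{both} $B_1$ and $B_2$ into $B$ so that each hits $S$, but the only $S$-vertex available on that side is $s$, and it can lie in at most one of the two extended (disjoint) branch-sets. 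Your appeal to $2$-connectivity does not help here: Menger gives $\min\{2,|\{u,v\}|,|S\cap B|\}$ vertex-disjoint $\{u,v\}$--$(S\cap B)$ paths, which is only one path when $|S\cap B|=1$. So the reduction breaks down, and in fact the claim that $G_A$ has no $T_A$-rooted $K_4$-minor can simply be false in this situation.

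This is precisely why the paper separates the analysis into $|S\cap A|\ge 2$ and $|S\cap B|\ge 2$ (where your argument works) versus $|S\cap B|\le 1$ (where it does not). In the latter case the paper does \emph{not} throw both $u$ and $v$ into the rooted set; instead, depending on whether $c(s)\in\{c(u),c(v)\}$, it adds at most one of $u,v$ to $S_A$ (the one sharing a colour with $s$), so that only a single branch-set ever needs to be extended through $B$, and then arranges the final colour-permutation to handle $s$ separately. You will need an analogous refinement to close the gap.
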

\begin{claimproof}
Suppose not, then there exists a separation $(A,B)$ of $G$ of order $|A \cap B|=2$. Write $A \cap B=\{u,v\}$. Let us fix (arbitrarily) a proper coloring $c:V(G)\rightarrow [4]$ of $G$. We now consider two cases depending on the distribution of $S$ on the two sides $A$ and $B$ of the separation. 

\medskip
\textbf{Case 1.} $|S \cap A| \ge 2$ and $|S \cap B| \ge 2$. Since $G$ is $2$-connected, by Menger's theorem there exist two vertex-disjoint paths $P_u^A, P_v^A$ in $G$ starting in $S \cap A$ and ending at $u, v$ respectively. Similarly, there exist two disjoint paths $P_u^B, P_v^B$ starting at $S \cap B$ and ending at $u$ and $v$ respectively. Clearly, the paths $P_u^A, P_v^A$ are fully contained in $G[A]$ and the paths $P_u^B, P_v^B$ are fully contained in $G[B]$. Since $G$ is $2$-connected, it is easy to see that $G[A]$ and $G[B]$ are connected graphs, for otherwise $u$ or $v$ would form a cut-vertex of $G$. This implies that there exists a path $P^A$ in $G[A]$ and a path $P^B$ in $G[B]$ such that $P^A$ has endpoints $p_u^A \in V(P_u^A)$ and $p_v^A \in V(P_v^A)$, while $P^B$ has endpoints $p_u^B \in V(P_u^B)$ and $p_v^B \in V(P_v^B)$. W.l.o.g. (by choosing shortest paths) we may assume $V(P_u^A) \cap V(P^A)=\{p_u^A\}, V(P_v^A) \cap V(P^A)=\{p_v^A\}$ and $V(P_u^B) \cap V(P^B)=\{p_u^B\}, V(P_v^B) \cap V(P^B)=\{p_v^B\}$. It is then easy to see that the sets $(V(P_u^A), V(P_v^A) \cup (V(P^A)\setminus\{p_u^A\}))$ form a $K_2$-minor in $G[A]$ rooted at $u,v$ and at $S$, and similarly $(V(P_u^B), V(P_v^B) \cup (V(P^B)\setminus\{p_u^B\}))$ form a $K_2$-minor in $G[B]$ rooted at $u,v$ and at $S$.

Consider first the case that $c(u) \neq c(v)$. Then we define $G_A, G_B$ as the graphs obtained from $G[A]$ and $G[B]$ respectively by adding an edge between $u$ and $v$ (if it does not already exist). Since $c$ induces proper colorings on $G_A$ and $G_B$, we have $\chi(G_A), \chi(G_B) \le 4$. Let us further define $S_A:=(S \cap A) \cup \{u,v\}, S_B:=(S \cap B) \cup \{u,v\}$. We claim that $G_A$ contains no $S_A$-rooted $K_4$-minor. Indeed, if $G_A$ were to contain an $S_A$-rooted $K_4$-minor, then replacing every occurrence of $u$ or $v$ in one of its branch-sets by the sets $V(P_u^B)$ or $V(P_v^B) \cup (V(P^B)\setminus\{p_u^B\})$ in $G$ respectively would yield an $S$-rooted $K_4$-minor in $G$, a contradiction. A symmetric argument shows that $G_B$ contains no $S_B$-rooted $K_4$-minor.

The above facts, $|V(G_A)|, |V(G_B)|<|V(G)|$ and the minimality assumption on $G$ now imply that there exist proper colorings $c_A:V(G_A) \rightarrow [4]$ of $G_A$ and $c_B:V(G_B)\rightarrow [4]$ of $G_B$ such that no vertex in $S_A$ receives color $1$ in $G_A$ and no vertex in $S_B$ receives color $1$ in $G_B$. In particular, $c_A(u), c_A(v)$ are distinct and not equal to $1$, and the same is true for $c_B(u), c_B(v)$. Hence, after applying a $1$-invariant permutation of $[4]$ that maps $c_B(u)$ to $c_A(u)$ and $c_B(v)$ to $c_A(v)$, we may assume w.l.o.g. that $c_A(u)=c_B(u), c_A(v)=c_B(v)$. Now, the common extension of $c_A$ and $c_B$ to $V(G)$ forms a proper coloring of $G$ in which no vertex in $S$ receives color $1$. This is a contradiction to $S$ being colorful and shows that the case $c(u) \neq c(v)$ cannot occur. 

Next, suppose that $c(u)=c(v)$. Then we define $G_A$ and $G_B$ as the graphs obtained from $G[A]$ and $G[B]$ by identifying $u$ and $v$ into a new vertex $x_{uv}$, and define $S_A:=S \cup \{x_{uv}\}$ and $S_B:=S \cup \{x_{uv}\}$. Similar to the previous case we claim that $G_A$ contains no $S_A$-rooted $K_4$-minor. Indeed, if it did, then possibly after replacing the occurence of the vertex $x_{uv}$ in one of the four branch-sets with the set $V(P_u^B) \cup V(P^B) \cup V(P_v^B)$ in $G$ yields the four branch-sets of an $S$-rooted $K_4$-minor in $G$, a contradiction. A symmetric argument shows that $G_B$ contains no $S_B$-rooted $K_4$-minor. Note that $c$ induces a proper $4$-coloring on both $G_A$ and $G_B$, so that $\chi(G_A), \chi(G_B) \le 4$. These facts imply (using the minimality of $G$ and that $G_A$ and $G_B$ are smaller than $G$) the existence of proper $[4]$-colorings $c_A, c_B$of $G_A$ and $G_B$ respectively in which no vertex in $S_A$ and $S_B$ respectively gets assigned color $1$. In particular, this means $c_A(x_{uv}) \neq 1 \neq c_B(x_{uv})$. Hence (possibly after applying a $1$-invariant permutation of the color-set $[4]$ in $c_B$ that maps $c_B(x_{uv})$ to $c_A(x_{uv})$) we may assume w.l.o.g. that $c_A(x_{uv})=c_B(x_{uv})$. It is now easy to see that $c^\ast:V(G)\rightarrow [4]$ defined by $c^\ast(u):=c^\ast(v):=c_A(x_{uv})$, $c^\ast(x):=c_A(x)$ for $x \in A \setminus \{u,v\}$, $c^\ast(x):=c_B(x)$ for $x \in B \setminus \{u,v\}$, forms a proper $4$-coloring of $G$ in which no vertex in $S$ gets assigned color $1$. This yields a contradiction to the fact that $S$ is colorful and hence shows that also the case $c(u)=c(v)$ is impossible. All in all, we conclude that Case~1 cannot occur.

\medskip
\textbf{Case 2.} $|S \cap A|\le 1$ or $|S \cap B| \le 1$. W.l.o.g. we assume $|S \cap B|\le 1$ in the following. We denote by $P$ a path in $G[B]$ connecting $u$ and $v$ (such a path must exist, since $G$ is $2$-connected). Furthermore, if $S \setminus A\neq \emptyset$, that is, $S \cap B=\{s\}$ for some $S \in B \setminus A$, then by $2$-connectivity of $G$ and Menger's theorem there exist two internally disjoint paths in $G$ connecting $s$ to $u$ and $v$, which we denote by $P_u$ and $P_v$ respectively. Clearly, $P_u$ and $P_v$ have to be entirely contained in $G[B]$. 

Moving on, let us first consider the case $c(u)=c(v)$. Let $G_A$ be the graph obtained from $G[A]$ by identifying $u$ and $v$ into a single vertex $x_{uv}$. Note that $c$ induces a proper $4$-coloring on $G_A$, and so $\chi(G_A) \le 4$. We furthermore define a set $S_A$ of vertices in $G_A$ as $S_A:=S$ if $S \cap B=\emptyset$ and $S_A:=S \cup \{x_{uv}\}$ if $S \cap B \neq \emptyset$. We claim that $G_A$ does not contain an $S_A$-rooted $K_4$-minor. In the case $S \subseteq A$, this follows since the occurrence of $x_{uv}$ in any branch-set of an $S_A$-rooted $K_4$-minor in $G_A$ can be replaced by the full set $V(P)$ in $G$, which would result in an $S$-rooted $K_4$-minor in $G$, a contradiction. Similarly, in the case $S \setminus A \neq \emptyset$ the occurrence of $x_{uv}$ in a branch-set of an $S_A$-rooted $K_4$-minor in $G_A$ can be replaced by the set $V(P_1) \cup V(P_2)$ in $G$, again resulting in an $S$-rooted $K_4$-minor in $G$, a contradiction. Hence, $G_A$ is $4$-colorable, contains no $S_A$-rooted $K_4$-minor and is smaller than $G$. By minimality of $G$ there must exist a proper coloring $c_A:V(G_A) \rightarrow [4]$ in which no vertex in $S_A$ receives color $1$. Let $\pi \in S_4$ be a permutation with $\pi(c(u))=\pi(c(v))=c_A(x_{uv})$ and such that $\pi(s) \neq 1$ in the case that $S \setminus A \neq \emptyset$. The existence of such a $\pi$ in can be seen as follows: Either we have $c(s)=c(u)=c(v)$, in which any choice of $\pi$ with $\pi(c(u))=\pi(c(v))=c_A(x_{uv})$ automatically satisfies $\pi(c(s))=c_A(x_{uv})\neq 1$, since $x_{uv} \in S_A$ by definition. And otherwise if $c(s) \neq c(u)=c(v)$, we have enough freedom to choose $\pi$ such that it maps $c(u)=c(v)$ to $c_A(x_{uv})$ and $c(s)$ to an element distinct from $1$. 
Having found the permutation $\pi$, we can see that the coloring $c^\ast$ of $G$, defined by $c^\ast(x):=c_A(x)$ if $x \in A \setminus B$, and $c^\ast(x):=\pi(c(x))$ if $x \in B$, forms a proper coloring of $G$ in which no vertex in $S$ receives color $1$. This is a contradiction to $S$ being colorful and shows that the subcase $c(u)=c(v)$ cannot occur. 

Next, let us consider the case $c(u) \neq c(v)$. We let $G_A$ be the graph obtained from $G[A]$ by adding an edge between $u$ and $v$ (if it not already exists). Given our assumption on $c$, we have $\chi(G_A) \le 4$. We define a set $S_A\subseteq V(G)$ as follows: If $S \subseteq A$, then $S_A:=S$. If $S \setminus A \neq \emptyset$, then we define $S_A:=S \setminus \{s\}$ if $c(s) \notin \{c(u),c(v)\}$, $S_A:=(S \setminus \{s\}) \cup \{u\}$ if $c(s)=c(u)$ and $S_A:=(S \setminus \{s\}) \cup \{v\}$ if $c(s)=c(v)$. We claim that in each case, $G_A$ does not contain an $S_A$-rooted $K_4$-minor. Suppose towards a contradiction such a rooted minor $(B_i)_{i=1}^4$ exists in $G_A$. If $S \subseteq A$ or $S \setminus A \neq \emptyset$ and $c(s) \notin \{c(u),c(v)\}$, then $S_A \subseteq S$, so that we can replace any branch-set $B_i$ containing $u$ by the set $B_i \cup (V(P)\setminus \{v\})$ to obtain an $S$-rooted $K_4$-minor in $G$. 
If $S \setminus A \neq \emptyset$ and $c(s)=c(u)$, we replace any branch-set $B_i$ containing $u$ by $B_i \cup (V(P_1) \cup V(P_2))\setminus \{v\}$ to obtain an $S$-rooted $K_4$-minor in $G$. Finally, if $S \setminus A \neq \emptyset$ and $c(s)=c(v)$, we replace any branch-set $B_i$ containing $v$ by $B_i \cup ((V(P_1) \cup V(P_2))\setminus \{u\})$ to obtain an $S$-rooted $K_4$-minor in $G$.
As an $S$-rooted $K_4$-minor in $G$ does not exist by initial assumption, we find that there is indeed no $S_A$-rooted $K_4$-minor in $G_A$. Using the minimality of $G$ we conclude the existence of a proper $4$-coloring $c_A:V(G_A) \rightarrow [4]$ of $G_A$ in which no vertex in $S_A$ receives color $1$. Note that $c_A(u)\neq c_A(v)$. Let $\pi:[4]\rightarrow [4]$ be a permutation such that $\pi(c(u))=c_A(u), \pi(c(v))=c_A(v)$ and $\pi(c(s)) \neq 1$ in the case $S\setminus A \neq \emptyset$. The existence of $\pi$ can be easily seen as follows: If $S \subseteq A$, this is obvious, as we just have to map two distinct colors to a set of two other distinct colors. If $S \setminus A \neq \emptyset$ and $c(s) \notin \{c(u),c(v)\}$, we can use the range of $4$ colors to map $c(u)$ to $c_A(u)$, $c(v)$ to $c_A(v)$ and $c(s)$ to an element of $[4]\setminus\{1,c_A(u),c_A(v)\}$. If $S\setminus A \neq \emptyset$ and $c(s)=c(u)$, then $u \in S_A$ ensures that $\pi(c(s))=\pi(c(u))=c_A(u)\neq 1$ for any $\pi$ that maps $c(u)$ to $c_A(u)$ and $c(v)$ to $c_A(v)$. A symmetric argument works in the case $c(s)=c(v)$. 

Having found the permutation $\pi$, consider the $[4]$-coloring $c^\ast$ of $G$ defined as $c^\ast(x):=c_A(x)$ if $x \in A \setminus B$, and $c^\ast(x):=\pi(c(x))$ if $x \in B$. This coloring is proper and no vertex in $S$ is assigned color $1$. Again, this contradicts $S$ being colorful and shows that also the assumption $c(u)\neq c(v)$ leads to a contradiction. 

Since both assumptions on the colors of $u$ and $v$ eventually yield a contradiction, we conclude that Case~2 cannot occur.

\medskip
We found that neither Case~1 nor Case~2 is feasible. Thus our initial assumption on the existence of a separation of order $2$ in $G$ must have been wrong. $G$ is a $3$-connected graph. 
\end{claimproof}

\begin{claim}
$S$ is spread out in $G$. 
\end{claim}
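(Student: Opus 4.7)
The plan is to argue by contradiction: assume some $3$-separation $(A,B)$ of $G$ has $S\subseteq A$ (WLOG, exchanging $A$ and $B$ if needed), and pick such a separation minimizing $|B|$. Write $T=A\cap B=\{u_1,u_2,u_3\}$. I will build a graph $G_A$ smaller than $G$ with a designated subset $S_A$ such that $G_A$ has no $S_A$-rooted $K_4$-minor; by minimality this will yield a proper $4$-coloring of $G_A$ with $S_A$ avoiding color $1$, which lifts to a proper $4$-coloring of $G$ avoiding color $1$ on $S$, contradicting colorfulness.

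If $|B|=4$, write $B=T\cup\{w\}$ with $N_G(w)=T$ by $3$-connectivity. Deleting $w$ yields a smaller graph, still $4$-colorable, and with no $S$-rooted $K_4$-minor (as $w\notin S$); minimality of $G$ provides a proper $4$-coloring of $G-w$ with $S$ avoiding color $1$, and it extends to $G$ by assigning $w$ one of the $\geq 1$ colors absent from $c(T)$. Assume henceforth $|B|\geq 5$. Fix a proper $4$-coloring $c$ of $G$ and let $\pi$ be the partition of $T$ induced by $c$. Form $G_A$ from $G[A]$ by identifying each $\pi$-class to a single vertex and adding all edges among the resulting images of $T$-vertices; let $S_A$ be the image of $S$. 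Then $|V(G_A)|\leq |A|<|V(G)|$ and $c$ descends to a proper $4$-coloring of $G_A$. Granting $G_A$ has no $S_A$-rooted $K_4$-minor, minimality yields a proper $4$-coloring $c_A$ of $G_A$ with $S_A$ avoiding color $1$; lifting $c_A$ to $A$ and extending to $B$ through a color permutation $\sigma$ of $c|_B$ satisfying $\sigma(c(u_i))=c_A([u_i])$ for all $i$ -- which exists because both $c|_T$ and the lift of $c_A$ to $T$ realize the pattern $\pi$ -- produces a proper $4$-coloring of $G$ with $S$ avoiding color $1$, a contradiction.

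The main challenge is showing $G_A$ has no $S_A$-rooted $K_4$-minor. I first prove $G[B]$ is $2$-connected via a sequence of separation arguments: any cut vertex $x$ of $G[B]$ lying in $T$ would expose a $2$-separator of $G$, contradicting $3$-connectivity; for $x\in B\setminus T$, each component of $G[B]-x$ must contain a $T$-vertex, again yielding a $2$-separator of $G$ except in the edge case where one component is a singleton $\{u_i\}$ -- but then $(A\cup\{x\},B\setminus\{u_i\})$ is a valid $3$-separation of $G$ with $S$ on the $A$-side and strictly smaller $B$-side, contradicting minimality of $|B|$. Once $G[B]$ is $2$-connected, the classical fact that any three vertices of a $2$-connected graph lie on a common cycle supplies three pairwise internally disjoint paths $P_{12},P_{23},P_{13}\subseteq G[B]$, with $P_{ij}$ joining $u_i$ and $u_j$ while avoiding the third $T$-vertex. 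Given a hypothetical $S_A$-rooted $K_4$-minor in $G_A$, I undo the identifications on the branch sets and, via a short case analysis on how the image of $T$ distributes among the branch sets, augment the lifted branch sets by appropriate pieces of the $P_{ij}$: pairwise internal disjointness of the paths keeps the branch sets disjoint, while the added paths simultaneously restore internal connectivity of any branch set containing a $\pi$-class of size $\geq 2$ and realize each added clique edge on $T/\pi$ as a cross-branch-set adjacency in $G$. The resulting collection is an $S$-rooted $K_4$-minor in $G$, the desired contradiction.
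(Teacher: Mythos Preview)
Your overall strategy matches the paper's closely: assume a $3$-separation $(A,B)$ with $S\subseteq A$, build $G_A$ from $G[A]$ by identifying the colour classes of $T:=A\cap B$ under a fixed $4$-colouring and adding the edges between the resulting images, show $G_A$ has no $S_A$-rooted $K_4$-minor by lifting through structure found in $G[B]$, and then use minimality to obtain a colouring of $G$ in which $S$ misses a colour. The paper proceeds the same way, the only substantive difference being \emph{how} the structure in $G[B]$ is obtained.

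Here your argument has a genuine gap. You assert as a ``classical fact'' that any three vertices of a $2$-connected graph lie on a common cycle, and use the three arcs of such a cycle as your paths $P_{12},P_{23},P_{13}$. This statement is false: in $K_{2,3}$ the three vertices of the size-$3$ side are pairwise non-adjacent and every cycle has length~$4$, so no cycle contains all three. (Dirac's theorem only guarantees $k$ vertices on a common cycle in a $k$-connected graph; for three vertices you would need $3$-connectivity of $G[B]$, which you have not established.) Your minimality of $|B|$ does not obviously rescue you either: if $G[B]\cong K_{2,3}$ plus the edge $ab$ between the two degree-$3$ vertices, then $G[B]$ is $2$-connected with $|B|=5$, both $a$ and $b$ have degree $4$ in $G$, and provided no vertex of $V(G)\setminus S$ has degree exactly~$3$ there is no $3$-separation with $|B'|=4$; yet there is still no cycle through $u_1,u_2,u_3$.

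The paper circumvents this by not asking for a cycle at all: it invokes Lemma~\ref{lemma:K3minor} to obtain a $T$-rooted $K_3$-minor $(D_1,D_2,D_3)$ in $G[B]$, whose hypothesis (for every $v\in B$, two of $u_1,u_2,u_3$ share a component of $G[B]-v$) is verified directly from $3$-connectivity of $G$ and $|B\setminus A|\ge 2$, with no need for $2$-connectivity of $G[B]$ or for minimising $|B|$. Three pairwise adjacent branch sets are exactly what your lifting step needs (replace $d_A$ by the union of the $D_i$'s corresponding to the identified $u_i$'s, and a surviving $u_j$ by $D_j$); indeed, once you have shown $G[B]$ is $2$-connected, the hypothesis of Lemma~\ref{lemma:K3minor} is immediate and you could simply quote it. In short, your proof is essentially the paper's proof with an extra (and ultimately unnecessary) detour through $2$-connectivity of $G[B]$, but the final step rests on a false lemma; replacing ``cycle through $T$'' by ``$T$-rooted $K_3$-minor'' repairs it.
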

\begin{claimproof}
Towards a contradiction, suppose there exists a separation $(A,B)$ in $G$ of order $3$ such that $S \subseteq A$.

Suppose first that $|B \setminus A|=1$. Writing $B \setminus A=\{x\}$, we can see that all neighbors of the vertex $x$ in $G$ are members of the separator $A \cap B$ of size $3$. Hence $x$ has degree at most $3$ in $G$. By initial assumption on $(A,B)$, we have $x \notin S$. Now, consider the graph $G-x$. By minimality of $G$, there exists a proper $[4]$-coloring of $G-x$ in which no vertex in $S$ receives color $1$. We may extend this to a proper coloring of $G$ by picking a color for $x$ that does not appear on its neighborhood. In this coloring of $G$ still no vertex in $S$ receives color $1$, contradicting that $S$ is colorful. 

Next, suppose $|B \setminus A|\ge 2$. Write $\{d_1,d_2,d_3\}=A \cap B$ for the vertices in the separator induced by $(A,B)$. Note that since $G$ is $3$-connected, by Menger's theorem for every $b \in B \setminus A$ there exist $3$ internally disjoint paths connecting $b$ to $d_1, d_2$ and $d_3$ respectively. Clearly, these paths have to be fully contained in $G[B]$. We now claim that for every vertex $v \in B$, at least two of $d_1,d_2,d_3$ are contained together in a common connected component of $G[B]-v$. Indeed, pick a vertex $b \in (B\setminus A)\setminus\{v\}$. Since $b$ is connected to $d_1,d_2,d_3$ by three internally disjoint paths in $G[B]$, at least two of these paths still exist in $G[B]-v$ and thus certify that their endpoints in $\{d_1,d_2,d_3\}$ are contained in a common connected component of $G[B]-v$. Now Lemma~\ref{lemma:K3minor} implies the existence of a $K_3$-minor in $G[B]$ rooted at $A \cap B$. Let $(D_i)_{i=1}^{3}$ be the branch-sets of such a rooted $K_3$-minor in $G[B]$ with $d_i \in D_i, i=1,2,3$. 

Moving on, fix any proper $4$-coloring $c:V(G) \rightarrow [4]$ of $G$. W.l.o.g. (possibly after relabelling colors and vertices) we may assume that one of the following three cases holds: (1) $c(d_1)=c(d_2)=c(d_3)=1$, (2) $c(d_1)=c(d_2)=1, c(d_3)=2$, (3) $c(d_1)=1, c(d_2)=2, c(d_3)=3$.

\medskip
\textbf{Case 1.} $c(d_1)=c(d_2)=c(d_3)=1$. Let $G_A$ be defined as the graph obtained from $G[A]$ by identifying $d_1, d_2, d_3$ into a single vertex $d_A$. Let $S_A \subseteq V(G_A)$ be defined as $S_A:=S$ if $S \cap \{d_1,d_2,d_3\}=\emptyset$ and $S_A:=S \cup \{d_A\}$ if $S \cap \{d_1,d_2,d_3\} \neq \emptyset$. Note that $c$ induces a proper $4$-coloring of $G_A$ by assigning color $1$ to $d_A$. Further, $G_A$ contains no $S_A$-rooted $K_4$-minor: Towards a contradiction, suppose that $(B_i)_{i=1}^{4}$ form branch-sets of an $S_A$-rooted $K_4$-minor in $G_A$. Then either $d_A \notin \bigcup_{i=1}^{4}{B_i}$ and then $(B_i)_{i=1}^{4}$ form an $S$-rooted $K_4$-minor in $G$, a contradiction. Otherwise, we have $d_A \in B_i$ for some $i \in [4]$ (w.l.o.g. $d_A \in B_1$) and then $((B_1\setminus \{d_A\}) \cup D_1 \cup D_2 \cup D_3, B_2, B_3, B_4)$ are easily seen to form the branch-sets of an $S$-rooted $K_4$-minor in $G$, again, a contradiction.
The facts that $\chi(G_A)\le 4$, that $G_A$ contains no $S_A$-rooted $K_4$-minor, that $|V(G_A)|<|V(G)|$ and the minimality assumption on $G$ now imply that $G_A$ admits a proper $4$-coloring $c_A:V(G_A)\rightarrow [4]$ such that no vertex in $S_A$ receives color $j$, for some $j \in [4]$. Possibly after permuting colors we may assume w.l.o.g. that $c_A(d_A)=1$. But then the coloring $c^\ast:V(G)\rightarrow [4]$ defined by $c^\ast(x):=c_A(x)$ for every $x \in A \setminus B$ and $c^\ast(x):=c(x)$ for every $x \in B$ forms a proper $4$-coloring of $G$ in which no vertex in $S$ receives color $j$, a contradiction to $S$ being colorful. This shows that Case~1 cannot occur. 

\medskip
\textbf{Case 2.}
$c(d_1)=c(d_2)=1, c(d_3)=2$. In this case, we define $G_A$ as the graph obtained from $G[A]$ by identifying only $d_1$ and $d_2$ into a new vertex $d_A$ and adding an edge between $d_A$ and $d_3$ (if it does not already exist). Again it is easy to see that the coloring $c$ induces a proper $4$-coloring of $G_A$ by assigning color $1$ to $d_A$. Let $S_A:=S$ if $S \cap \{d_1,d_2\}=\emptyset$ and $S_A:=S \cup \{d_A\}$ otherwise. We claim that there is no $S_A$-rooted $K_4$-minor in $G_A$. Towards a contradiction, suppose such a minor would be described by branch-sets $(B_i)_{i=1}^{4}$.
For each $i \in [4]$, set 
$$B_i^\ast:=\begin{cases}B_i, & \text{if }B_i \cap \{d_A,d_3\}=\emptyset, \cr (B_i\setminus \{d_A\}) \cup (D_1 \cup D_2), & \text{if }B_i \cap \{d_A,d_3\}=\{d_A\}, \cr (B_i\setminus \{d_3\}) \cup D_3, & \text{if }B_i \cap \{d_A,d_3\}=\{d_3\}, \cr (B_i\setminus \{d_A,d_3\}) \cup (D_1 \cup D_2 \cup D_3), & \text{if } B_i \supseteq \{d_A,d_3\}
\end{cases}.$$  It is then easy to see that $(B_i^\ast)_{i=1}^{4}$ form an $S$-rooted $K_4$-minor in $G$, a contradiction. 

Given that $G_A$ is a $4$-colorable graph with no $S_A$-rooted $K_4$-minor which is smaller than $G$, there must exist a proper $4$-coloring $c_A:V(G_A)\rightarrow [4]$ of $G_A$ such that for some $j \in [4]$ no vertex in $S_A$ receives color $j$. Since necessarily $c_A(d_A) \neq c_A(d_3)$ in this coloring, we may assume w.l.o.g. (possibly after permuting colors) that $c_A(d_A)=1, c_A(d_3)=2$. But then it is easy to see that $c^\ast:V(G) \rightarrow [4]$, defined by $c^\ast(x):=c_A(x)$ for every $x \in A\setminus B$ and $c^\ast(x):=c(x)$ for every $x \in B$ forms a proper coloring of $G$ such that no vertex in $S$ receives color $j$. This is a contradiction to $S$ being colorful and shows that also Case~2 cannot occur.  

\medskip
\textbf{Case 3.} $c(d_1)=1, c(d_2)=2, c(d_3)=3$. In this final case we define a graph $G_A$ on the vertex set $A$ simply by adding all the three edges $d_1d_2, d_1d_3,d_2d_3$ to $G[A]$ (if they do not exist already). Since $c$ induces a proper coloring also with the added edges, we have $\chi(G_A)\le 4$. We further claim that $G_A$ has no $S$-rooted $K_4$-minor. Indeed if it did, then we could easily obtain a $K_4$-minor rooted at $S$ in $G$ by replacing each occurrence of a vertex $d_i$ in one of its branch-sets by the full set $D_i\subseteq B$ in $G$. Hence, and by minimality of $G$, we find that there is a proper $[4]$-coloring $c_A$ of $G_A$ in which no vertex in $S$ receives color $j$, for some $j \in [4]$. Since $c_A(d_1),c_A(d_2),c_A(d_3)$ have to be pairwise different, we may assume w.l.o.g. $c_A(d_i)=i$ for $i=1,2,3$. However, then clearly the coloring $c^\ast$ of $G$ which is the common extension of $c_A$ and the restriction of $c$ to $B$ is a proper $[4]$-coloring of $G$ in which no vertex in $S$ receives color $j$. This again is a contradiction to our assumption that $S$ is colorful in $G$. Thus, Case~3 cannot occur as well.

As we have reached contradictions in all $3$ cases, we conclude that our initial assumption on the existence of a $3$-separation $(A,B)$ with $S \subseteq A$ was wrong. This proves the claim, $S$ is indeed spread out.
\end{claimproof}

Having established that $G$ is a $3$-connected graph and $S$ is spread out in $G$ in the previous claims, and since $G$ has no $S$-rooted $K_4$-minor by assumption, we now may apply Lemma~\ref{lma:rootyness} to $G$ (note that $|S|\ge 4$ holds trivally, for otherwise $S$ is not colorful). We thus find that $G^a(S)$ (the graph obtained from $G$ by adding a new vertex adjacent to all elements of $S$) is planar. But then the $4$-color theorem guarantees the existence of a proper $4$-coloring of $G^a(S)$. Then in the induced proper $4$-coloring of $G$, no vertex in $S$ can have the color assigned to the additional vertex in $G^a(S)$. This is a contradiction to $S$ being colorful. This contradiction shows that our initial assumption on the existence of a smallest counterexample $G$ was wrong, and concludes the proof of Theorem~\ref{thm:main}. 
\end{proof}

\end{document}